\theoremstyle{plain}
\newtheorem{thm}{Theorem}[section]
\newtheorem{lem}[thm]{Lemma}
\newtheorem{prop}[thm]{Proposition}
\newtheorem{cor}[thm]{Corollary}
\newtheorem{conj}[thm]{Conjecture}
\theoremstyle{definition}
\theoremstyle{remark}
\newtheorem{eg}[thm]{Example}
 \font\cyr=wncyr10
 \newcommand{\nc}{\newcommand}
\nc{\Cyc}[1]{\underset{#1}{\rm Cyc}\,}
\nc{\Sym}[1]{\underset{#1}{\rm Sym}\,}
\renewcommand\Re{{\rm Re}}
\nc{\per}[1]{\underset{#1}{\boldsymbol \pi}\,}
 \nc{\MT}{{\rm MT}}
 \nc{\XX}{{X}}
 \nc{\gF}{{\varPhi}}
 \nc{\ot}{\otimes}
 \nc{\wht}{\widehat}
 \nc{\bwg}{{\bigwedge}}
 \nc{\wg}{{\wedge}}
 \nc{\mmu}{{\boldsymbol{\mu}}}
 \nc{\mal}{{{\scriptstyle \maltese}}}
 \nc{\fA}{{\mathfrak A}}
 \nc{\HH}{{\mathfrak H}}
 \nc{\ra}{\rightarrow}
 \nc{\ors}{{\bfs}}
 \nc{\orr}{{\bfr}}
 \nc{\os}{{\overset}}
 \nc{\G}{{\mathbb G}}
 \nc{\F}{{\mathbb F}}
 \nc{\Z}{{\mathbb Z}}
 \nc{\R}{{\mathbb R}}
 \nc{\N}{{\mathbb N}}
 \nc{\ZN}{{\mathbb Z_{\ge 0}}}
 \nc{\Q}{{\mathbb Q}}
 \nc{\C}{{\mathbb C}}
 \nc{\CP}{{\mathbb{CP}}}
 \nc{\Cnn}{{\mathbb C}_{\ge 0}}
 \nc{\Cp}{{\mathbb C}_{>0}}
 \nc{\MPV}{{\mathcal{MPV}}}
 \nc{\tB}{{\tilde B}}
 \nc{\suf}{{\ast\,}}
 \nc{\sufq}{{\ast_q\,}}
 \nc{\gam}{{\gamma}}
 \nc{\gG}{{\Gamma}}
 \nc{\om}{{\omega}}
 \nc{\vep}{{\varepsilon}}
 \nc{\ga}{{\alpha}}
 \nc{\gl}{{\lambda}}
 \nc{\gb}{{\beta}}
 \nc{\gf}{{\varphi}}
 \nc{\gd}{{\delta}}
 \nc{\orgd}{{\vec \gd\,}}
 \nc{\gs}{{\sigma}}
 \nc{\gth}{{\theta}}
 \nc{\gS}{{\Sigma}}
 \nc{\gk}{{\kappa}}
  \nc{\gz}{{\zeta}}
 \nc{\tgz}{{\tilde{\zeta}}}
 \nc{\gO}{{\Omega}}
 \nc{\sif}{{\mathcal S}}
 \nc{\gt}{{\tau}}
 \nc{\Lra}{\Longrightarrow}
 \nc{\lra}{\longrightarrow}
 \nc{\lmaps}{\longmapsto}
 \nc{\fS}{{\mathfrak S}}
 \nc{\DD}{{\mathfrak D}}
 \nc{\Llra}{\Longleftrightarrow}
 \nc{\ol}{\overline}
 \nc{\ola}{\overleftarrow}
 \nc{\lms}{\longmapsto}
 \nc{\cv}{{{\mathsf c}{\mathsf v}}}
 \nc{\zq}{{\zeta_q}}
 \nc\qup{{q\uparrow 1}}
 \nc{\us}{\underset}
 \nc{\tn}{{\tilde{n}}}
 \nc{\gD}{{\Delta}}
 \nc{\bi}{{\bf i}}
 \nc{\bfone}{{\bf 1}}
 \nc{\bfa}{{\bf a}}
 \nc{\bfb}{{\bf b}}
 \nc{\bfc}{{\bf c}}
 \nc{\bfd}{{\bf d}}
 \nc{\bfe}{{\bf e}}
 \nc{\bff}{{\bf f}}
 \nc{\bfg}{{\bf g}}
 \nc{\bfi}{{\bf i}}
 \nc{\bfj}{{\bf j}}
 \nc{\bfn}{{\bf n}}
 \nc{\bfl}{{\bf l}}
 \nc{\bfk}{{\bf k}}
 \nc{\bfm}{{\bf m}}
 \nc{\bfo}{{\bf o}}
 \nc{\bfp}{{\bf p}}
 \nc{\bfq}{{\bf q}}
 \nc{\bfr}{{\bf r}}
 \nc{\bfs}{{\bf s}}
 \nc{\bft}{{\bf t}}
 \nc{\bfu}{{\bf u}}
 \nc{\bfv}{{\bf v}}
 \nc{\bfw}{{\bf w}}
 \nc{\bfx}{{\bf x}}
 \nc{\bfy}{{\bf y}}
 \nc{\bfz}{{\bf z}}
 \nc{\bfB}{{\bf B}}
 \nc{\bfP}{{\bf P}}
 \nc{\bfQ}{{\bf Q}}
 \nc{\bfY}{{\bf Y}}
 \nc{\bfgb}{{\boldsymbol \gb}}
 \nc{\bfga}{{\boldsymbol \ga}}
 \nc{\bfrho}{{\boldsymbol \rho}}
 \nc{\bfchi}{{\boldsymbol \chi}}
 \nc{\QX}{{\Q\langle \bfX\rangle}}
 \nc{\QY}{{\Q\langle \bfY\rangle}}
 \nc{\CX}{{\C\langle \bfX\rangle}}
 \nc{\CY}{{\C\langle \bfY\rangle}}
 \nc{\QXX}{{\Q\langle\!\langle \bfX\rangle\!\rangle}}
 \nc{\QYY}{{\Q\langle\!\langle \bfY\rangle\!\rangle}}
 \nc{\CXX}{{\C\langle\!\langle \bfX\rangle\!\rangle}}
 \nc{\CYY}{{\C\langle\!\langle \bfY\rangle\!\rangle}}
 \nc{\bbA}{{\mathbb A}}
 \nc{\bbB}{{\mathbb B}}
 \nc{\bbC}{{\mathbb C}}
 \nc{\bbD}{{\mathbb D}}
 \nc{\bbE}{{\mathbb E}}
 \nc{\bbF}{{\mathbb F}}
 \nc{\bbG}{{\mathbb G}}
 \nc{\bbH}{{\mathbb H}}
 \nc{\bbI}{{\mathbb I}}
 \nc{\bbJ}{{\mathbb J}}
 \nc{\bbK}{{\mathbb K}}
 \nc{\bbL}{{\mathbb L}}
 \nc{\bbM}{{\mathbb M}}
 \nc{\bbN}{{\mathbb N}}
 \nc{\bbO}{{\mathbb O}}
 \nc{\bbP}{{\mathbb P}}
 \nc{\bbQ}{{\mathbb Q}}
 \nc{\bbR}{{\mathbb R}}
 \nc{\bbS}{{\mathbb S}}
 \nc{\bbT}{{\mathbb T}}
 \nc{\bbU}{{\mathbb U}}
 \nc{\bbV}{{\mathbb V}}
 \nc{\bbW}{{\mathbb W}}
 \nc{\bbX}{{\mathbb X}}
 \nc{\bbY}{{\mathbb Y}}
 \nc{\bbZ}{{\mathbb Z}}
 \nc{\bba}{{\mathbb a}}
 \nc{\bbb}{{\mathbb b}}
 \nc{\bbc}{{\mathbb c}}
 \nc{\bbd}{{\mathbb d}}
 \nc{\bbe}{{\mathbb e}}
 \nc{\bbf}{{\mathbb f}}
 \nc{\bbg}{{\mathbb g}}
 \nc{\bbh}{{\mathbb h}}
 \nc{\bbi}{{\mathbb i}}
 \nc{\bbk}{{\mathbb k}}
 \nc{\bbl}{{\mathbb l}}
 \nc{\bbm}{{\mathbb m}}
 \nc{\bbn}{{\mathbb n}}
 \nc{\bbo}{{\mathbb o}}
 \nc{\bbp}{{\mathbb p}}
 \nc{\bbq}{{\mathbb q}}
 \nc{\bbr}{{\mathbb r}}
 \nc{\bbs}{{\mathbb s}}
 \nc{\bbt}{{\mathbb t}}
 \nc{\bbu}{{\mathbb u}}
 \nc{\bbv}{{\mathbb v}}
 \nc{\bbw}{{\mathbb w}}
 \nc{\bbx}{{\mathbb x}}
 \nc{\bby}{{\mathbb y}}
 \nc{\bbz}{{\mathbb z}}
 \nc{\MZV}{{\mathcal{MZV}}}
 \nc{\calA}{{\mathcal A}}
 \nc{\calB}{{\mathcal B}}
 \nc{\calC}{{\mathcal C}}
 \nc{\calD}{{\mathcal D}}
 \nc{\calE}{{\mathcal E}}
 \nc{\calF}{{\mathcal F}}
 \nc{\calG}{{\mathcal G}}
 \nc{\calH}{{\mathcal H}}
 \nc{\calI}{{\mathcal I}}
 \nc{\calJ}{{\mathcal J}}
 \nc{\calK}{{\mathcal K}}
 \nc{\calL}{{\mathcal L}}
 \nc{\calM}{{\mathcal M}}
 \nc{\calN}{{\mathcal N}}
 \nc{\calO}{{\mathcal O}}
 \nc{\calP}{{\mathcal P}}
 \nc{\calQ}{{\mathcal Q}}
 \nc{\calR}{{\mathcal R}}
 \nc{\calS}{{\mathcal S}}
 \nc{\calT}{{\mathcal T}}
 \nc{\calU}{{\mathcal U}}
 \nc{\calV}{{\mathcal V}}
 \nc{\calW}{{\mathcal W}}
 \nc{\calX}{{\mathcal X}}
 \nc{\calY}{{\mathcal Y}}
 \nc{\calZ}{{\mathcal Z}}
  \nc{\cala}{{\mathcal a}}
 \nc{\calb}{{\mathcal b}}
 \nc{\calc}{{\mathcal c}}
 \nc{\cald}{{\mathcal d}}
 \nc{\cale}{{\mathcal e}}
 \nc{\calf}{{\mathcal f}}
 \nc{\calg}{{\mathcal g}}
 \nc{\calh}{{\mathcal h}}
 \nc{\cali}{{\mathcal i}}
 \nc{\calj}{{\mathcal j}}
 \nc{\calk}{{\mathcal k}}
 \nc{\call}{{\mathcal l}}
 \nc{\calm}{{\mathcal m}}
 \nc{\caln}{{\mathcal n}}
 \nc{\calo}{{\mathcal o}}
 \nc{\calp}{{\mathsf p}}
 \nc{\calq}{{\mathcal q}}
 \nc{\calr}{{\mathcal r}}
 \nc{\cals}{{\mathcal s}}
 \nc{\calt}{{\mathcal t}}
 \nc{\calu}{{\mathcal u}}
 \nc{\calv}{{\mathcal v}}
 \nc{\calw}{{\mathcal w}}
 \nc{\calx}{{\mathcal x}}
 \nc{\caly}{{\mathcal y}}
 \nc{\calz}{{\mathcal z}}
 \nc{\frakA}{{\mathfrak A}}
 \nc{\frakB}{{\mathfrak B}}
 \nc{\frakC}{{\mathfrak C}}
 \nc{\frakD}{{\mathfrak D}}
 \nc{\frakE}{{\mathfrak E}}
 \nc{\frakF}{{\mathfrak F}}
 \nc{\frakG}{{\mathfrak G}}
 \nc{\frakH}{{\mathfrak H}}
 \nc{\frakI}{{\mathfrak I}}
 \nc{\frakJ}{{\mathfrak J}}
 \nc{\frakK}{{\mathfrak K}}
 \nc{\frakL}{{\mathfrak L}}
 \nc{\frakM}{{\mathfrak M}}
 \nc{\frakN}{{\mathfrak N}}
 \nc{\frakO}{{\mathfrak O}}
 \nc{\frakP}{{\mathfrak P}}
 \nc{\frakQ}{{\mathfrak Q}}
 \nc{\frakR}{{\mathfrak R}}
 \nc{\frakS}{{\mathfrak S}}
 \nc{\frakT}{{\mathfrak T}}
 \nc{\frakU}{{\mathfrak U}}
 \nc{\frakV}{{\mathfrak V}}
 \nc{\frakW}{{\mathfrak W}}
 \nc{\frakX}{{\mathfrak X}}
 \nc{\frakY}{{\mathfrak Y}}
 \nc{\frakZ}{{\mathfrak Z}}
 \nc{\fraka}{{\mathfrak a}}
 \nc{\frakb}{{\mathfrak b}}
 \nc{\frakc}{{\mathfrak c}}
 \nc{\frakd}{{\mathfrak d}}
 \nc{\frake}{{\mathfrak e}}
 \nc{\frakf}{{\mathfrak f}}
 \nc{\frakg}{{\mathfrak g}}
 \nc{\frakh}{{\mathfrak h}}
 \nc{\fraki}{{\mathfrak i}}
 \nc{\frakj}{{\mathfrak j}}
 \nc{\frakk}{{\mathfrak k}}
 \nc{\frakl}{{\mathfrak l}}
 \nc{\frakm}{{\mathfrak m}}
 \nc{\frakn}{{\mathfrak n}}
 \nc{\frako}{{\mathfrak o}}
 \nc{\frakp}{{\mathfrak p}}
 \nc{\frakq}{{\mathfrak q}}
 \nc{\frakr}{{\mathfrak r}}
 \nc{\fraks}{{\mathfrak s}}
 \nc{\frakt}{{\mathfrak t}}
 \nc{\fraku}{{\mathfrak u}}
 \nc{\frakv}{{\mathfrak v}}
 \nc{\frakw}{{\mathfrak w}}
 \nc{\frakx}{{\mathfrak x}}
 \nc{\fraky}{{\mathfrak y}}
 \nc{\frakz}{{\mathfrak z}}
 \nc{\so}{{\mathfrak so}}
 \nc{\sa}{{\mbox{{\scriptsize \cyr x}}}}
 \nc{\slfour}{{\mathfrak sl}_4}
 \nc{\one}{{\bf 1}}
 \nc{\zero}{{\bf 0}}
 \nc{\Qxy}{\Q\langle x,y\rangle}
\nc{\li}[1]{\textcolor{red}{\tt Li:#1}}
\begin{document}

\title{Families of weighted sum formulas for\\ multiple zeta values}

\author{Li Guo}
\address{
Department of Mathematics and Computer Science,
Rutgers University,
Newark, NJ 07102, USA}
\email{liguo@rutgers.edu}

\author{Peng Lei}
\address{Department of Mathematics,
    Lanzhou University,
    Lanzhou, Gansu 730000, China}
\email{leip@lzu.edu.cn}

\author{Jianqiang Zhao}
\address{Department of Mathematics,
Eckerd College, St. Petersburg, FL 33711, USA}
\email{zhaoj@ekcerd.edu}

\date{}

\begin{abstract}
Euler's sum formula and its multi-variable and weighted generalizations form
a large class of the identities of multiple zeta values.
In this paper we prove a family of identities involving Bernoulli numbers and
apply them to obtain infinitely many weighted sum formulas for double zeta values
and triple zeta values where the weight coefficients are given by symmetric polynomials.
\end{abstract}

\subjclass[2010]{11M32, and 11B68}

\maketitle

\tableofcontents

\allowdisplaybreaks

\section{Introduction}

Multiple zeta functions are multiple variable generalizations of the Riemann zeta function.
For fixed positive integer $d$ and $d$-tuple of complex numbers $\bfs=(s_1,\dots, s_d)$,
the multiple zeta function is defined by
\begin{equation}\label{equ:MZFdefn}
\zeta(\bfs)=\sum_{ k_1>\dots>k_d>0} k_1^{-s_1}\cdots k_d^{-s_d}
\end{equation}
where $\bfs$ satisfies $\Re(s_1+\dots+s_j)>j$ for all $j=1,\dots,d$. The number $d$ is
called the \emph{depth} (or \emph{length}) and
$s_1+\dots+s_d$ the \emph{weight}, denoted by $|\bfs|.$ Their convergent
special values at positive integers are called multiple zeta values. These values
can be traced back to a series of
correspondences between Leonhard Euler and Christian Goldbach \cite{JuskevicWi1965}.
On the Christmas Eve of 1742, with different notation Goldbach wrote down some special cases
of the following infinite sum on a letter to Euler:
\begin{equation*}
\sum_{a\ge b\ge 1} \frac{1}{a^m b^n},
\end{equation*}
where $m$ and $n$ are positive integer. Using our notation this is $\zeta(m,n)+\zeta(m+n)$
where $\zeta(m,n)$ is a double zeta value (DZV for short). Later Euler discovered the following sum formula
\begin{equation}\label{eq:sum}
\sum_{k=2}^{n-1} \zeta(k,n-k) = \zeta(n),\quad n\geq 3,
\end{equation}
and decomposition formula
\begin{equation}\label{equ:EulerDecompositionFormula}
  2\zeta(n,1)=n\zeta(n+1)-\sum_{i=1}^{n-2} \zeta(n-i)\zeta(i+1), \quad n\geq 2.
\end{equation}

There are many generalizations and variations of the sum formula in the literature.
Ohno and Zudilin~\cite{OhnoZu2008} proved a weighted form of Euler's sum
formula
\begin{equation}\label{equ:WtSumMZV}
\sum_{k=2}^{w-1} 2^k \zeta(k,w-k) = (w+1)\zeta (w), \quad w\ge 3.
\end{equation}
Later this was generalized by Guo and Xie \cite{GuoXi2009} to arbitrary depths.
During their study of DZVs and modular forms
Gangl et al.\ \cite{GKZ2006} made the following discovery:
For all $n\ge 2$ we have
\begin{align} \label{equ:GKZDblZetaEven}
    \sum_{k=1}^{n-1} \zeta(2k,2n-2k)=&\frac34\zeta(2n),  \\
    \sum_{k=1}^{n-1} \zeta(2k+1,2n-2k-1)=&\frac14\zeta(2n). \label{equ:GKZDblZetaOdd}
\end{align}
Recently, Hoffman \cite{Hoffman2012} extended Eq.~\eqref{equ:GKZDblZetaEven} to arbitrary depths.
Of course these can also be regarded as weighted sum formulas.
Some more complicated identities in depth two can be found in \cite{Nakamura2009}
\begin{align} \label{equ:NakamuraDblZeta1}
  \sum_{k=1}^{n-1} (4^k+4^{n-k}) \zeta(2k,2n-2k)=\left(n+\frac43+\frac{4^n}6\right)\zeta(2n),\\
\label{equ:NakamuraDblZeta2}
  \sum_{k=2}^{n-2} (2k-1)(2n-2k-1)  \zeta(2k,2n-2k)=\frac34 (n-3)\zeta(2n).
\end{align}
Nakamura's idea to prove Eq.~\eqref{equ:NakamuraDblZeta2} is to show
the following identity of Bernoulli numbers:
\begin{equation}\label{equ:CorrectEie}
 6\sum_{i,j\ge 4, i+j=k} (i-1)(j-1) B_iB_j\binom{k}{i}
=- (k-1)(k^2-5k-6) B_k,
\end{equation}
where $B_j$  is a  Bernoulli number with generating function
\begin{equation*}
     \frac{t}{e^t-1}=\sum_{j=0}^\infty B_j\frac{t^j}{j!}.
\end{equation*}
Eq.~\eqref{equ:CorrectEie} quickly leads to
\begin{equation}\label{equ:Eier=1Result}
 6\sum_{k=2}^{n-2} (2k-1)(2n-2k-1) \zeta(2k)\zeta(2n-2k)
=(n-3)(4n^2-1) \zeta(2n)
\end{equation}
by Euler's famous evaluation
\begin{equation}\label{equ:EulerFamousID}
 \zeta(2n) =-\frac{B_{2n}}{2(2n)!}(2\pi i)^{2n},\quad \text{and}\quad \zeta(1-2n) =-\frac{B_{2n}}{2n}.
\end{equation}

The identity in Eq.~\eqref{equ:CorrectEie} relating Bernoulli numbers
have been obtained by a few diverse methods.
Rademacher \cite[p.~121]{Rademacher1973} derived
it as a consequence of an identity
among Eisenstein series of different weights. Shimura essential
did the same in his book \cite[(11.10)]{Shimura2007}.
Eie \cite{Eie1996} proved it using the zeta function
associated with some polynomials. We use his method in this paper to obtain
infinitely many families of Bernoulli number identities
similar to Eq.~\eqref{equ:CorrectEie} in Section~\ref{sec:bern}. These identities lead to infinitely many different
weighted sum formulas for double and triple zeta values with symmetric polynomial coefficients. We will consider the case of double zeta values in Section~\ref{sec:double} and the case of triple zeta values in Section~\ref{sec:triple}. We end the paper with a conjecture for the general case.

\section{Weighted sum of products of Bernoulli numbers}
\label{sec:bern}
In this section we shall prove a sum formula for products $B_{2j}B_{2k}$
for fixed $j+k$ with some weight coefficients.
This in turn will lead to a sum formula for products $\zeta(2j)\zeta(2k)$
with weight coefficients given by arbitrary polynomials in $j$ and $k$.
First we define a zeta function that will be useful in both depth two and depth three cases:
\begin{equation*}
Z_2(s;r_1,r_2)=\sum_{m_1,m_2=1}^\infty m_1^{r_1}
    m_2^{r_2}(m_1+m_2)^{-s}.
\end{equation*}
The basic idea of Eie in \cite{Eie1996,Eie2009} is to compute the special value of this function when $s$ is
some appropriate negative integer using two different methods. By comparing
the two expressions one can derive an identity of Bernoulli numbers which yields the
desired identity of multiple zeta values.

First we want to find a useful expression of $Z_2(s;r_1,r_2)$. We have
\begin{align}
Z_2(s;r_1,r_2)=&\sum_{m=1}^\infty\sum_{m_1=1}^{m-1}m_1^{r_1}(m-m_1)^{r_2}m^{-s} \notag\\
=&\sum_{m=1}^\infty\sum_{m_1=1}^{m-1}m_1^{r_1}\sum_{i=0}^{r_2}\binom{r_2}{i}(-1)^i
     m^{r_2-i}m_1^im^{-s} \notag\\
=&\sum_{m=1}^\infty\sum_{i=0}^{r_2}(-1)^i\binom{r_2}{i}
    \left(\sum_{k=0}^{r_1+i}\binom{r_1+i+1}{k}\frac{(-1)^kB_k}{r_1+i+1}m^{r_1+i+1-k}-m^{r_1+i}\right)\cdot m^{r_2-i-s}\notag\\
=&\sum_{i=0}^{r_2}(-1)^i\binom{r_2}{i}\sum_{k=0}^{r_1+i}\binom{r_1+i+1}{k} \frac{(-1)^kB_k}{r_1+i+1}f_{k}(s;r_1,r_2)-\gd_{r_2,0}\zeta(s-r_1-r_2),\label{equ:Z2step1}
\end{align}
where $f_{k}(s;r_1,r_2)=\zeta(s+k-r_1-r_2-1)$.
To simplify this further we need the following combinatorial lemmas.
\begin{lem}\label{lem:Combin1}
Let $k$, $r_1$, and $r_2$ be nonnegative integers. Then
$$
\sum_{i=0}^{r_2}(-1)^i\binom{r_2}{i}\binom{r_1+i}{k-1}
 =(-1)^{r_2}\binom{r_1}{k-r_2-1}.
$$
\end{lem}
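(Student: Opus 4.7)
The plan is to prove this by a short generating function / coefficient extraction argument, which is the cleanest way to handle alternating sums of products of binomial coefficients of this shape.

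First I would interpret the inner binomial coefficient as a coefficient in a polynomial expansion, namely
\[
\binom{r_1+i}{k-1}=[x^{k-1}]\,(1+x)^{r_1+i},
\]
where $[x^{k-1}]$ denotes extraction of the coefficient of $x^{k-1}$. Substituting this into the left hand side of the claimed identity lets me pull $(1+x)^{r_1}$ out of the sum and collapse the remaining alternating sum by the binomial theorem:
\[
\sum_{i=0}^{r_2}(-1)^i\binom{r_2}{i}(1+x)^{i}=\bigl(1-(1+x)\bigr)^{r_2}=(-x)^{r_2}.
\]
This reduces the left hand side to $(-1)^{r_2}\,[x^{k-1-r_2}]\,(1+x)^{r_1}$, which is exactly $(-1)^{r_2}\binom{r_1}{k-r_2-1}$, as desired.

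The only subtlety is making sure the identity is correctly interpreted when the index $k-r_2-1$ falls outside the usual range, i.e.\ when it is negative or exceeds $r_1$. In the coefficient extraction above, $[x^{k-1-r_2}](1+x)^{r_1}$ is automatically zero if $k-1-r_2<0$ or $k-1-r_2>r_1$, which matches the standard convention $\binom{r_1}{k-r_2-1}=0$ in those cases, so no case analysis is really needed. If one prefers, the same identity can be phrased as a finite difference computation: writing $\sum_i(-1)^i\binom{r_2}{i}\binom{r_1+i}{k-1}=(-1)^{r_2}\Delta^{r_2}\!\left[\binom{x}{k-1}\right]_{x=r_1}$ and using $\Delta\binom{x}{m}=\binom{x}{m-1}$ gives the result after $r_2$ iterations.

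There is no real obstacle here; the identity is essentially the Vandermonde/negation-type identity in disguise. The only thing to be careful about is sign bookkeeping in the step $(1-(1+x))^{r_2}=(-x)^{r_2}$ and the resulting shift of the coefficient index by $r_2$.
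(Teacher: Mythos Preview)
Your proof is correct and follows essentially the same route as the paper: both arguments interpret the sum via the generating function $(1+x)^{r_1+i}$, collapse the alternating sum over $i$ using $\sum_i(-1)^i\binom{r_2}{i}(1+x)^i=(1-(1+x))^{r_2}=(-x)^{r_2}$, and then read off the coefficient of the appropriate power of $x$. The paper phrases this by multiplying through by $x^k$ and summing over $k$ before comparing coefficients, whereas you use coefficient-extraction notation directly, but the computation is identical.
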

\begin{proof} We have
{\allowdisplaybreaks
\begin{align*}
 & \sum_{k=1}^{r_1+r_2+1}\sum_{i=0}^{r_2}(-1)^i\binom{r_2}{i}\binom{r_1+i}{k-1}x^k\\
=&\sum_{i=0}^{r_2}(-1)^i\binom{r_2}{i}\sum_{k=0}^{r_1+r_2}\binom{r_1+i}{k} x^{k+1}
=\sum_{i=0}^{r_2}(-1)^i\binom{r_2}{i}x(1+x)^{r_1+i}\\
=&x(1+x)^{r_1}(1-(1+x))^{r_2}
=\sum_{l=0}^{r_1}\binom{r_1}{l}(-1)^{r_2}x^{r_2+l+1}
=\sum_{k=0}^{r_1+r_2+1}(-1)^{r_2}\binom{r_1}{k-r_2-1}x^k.
\end{align*}}
Comparing the coefficients we obtain the lemma immediately.
\end{proof}

\begin{lem}\label{lem:Combin2}
Let $r_1$ and $r_2$ be two nonnegative integers. Then
\begin{equation*}
\sum_{i=0}^{r_2}(-1)^i\binom{r_2}{i}\frac{1}{r_1+i+1}=\frac{ r_1!r_2!}{(r_1+r_2+1)!}.
\end{equation*}
\end{lem}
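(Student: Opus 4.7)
The plan is to recognize the left-hand side as a Beta-function integral, which immediately gives the claimed closed form. The crucial observation is that the rational factor $\frac{1}{r_1+i+1}$ is nothing but $\int_0^1 x^{r_1+i}\,dx$, so that the alternating binomial sum inside can be collapsed using the binomial theorem.

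Concretely, I would swap the finite sum and the integral to write
$$
\sum_{i=0}^{r_2}(-1)^i\binom{r_2}{i}\frac{1}{r_1+i+1}
=\int_0^1 x^{r_1}\sum_{i=0}^{r_2}\binom{r_2}{i}(-x)^i\,dx
=\int_0^1 x^{r_1}(1-x)^{r_2}\,dx,
$$
and then recognize the last integral as the Beta integral $B(r_1+1,r_2+1)=\dfrac{r_1!\,r_2!}{(r_1+r_2+1)!}$.

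Since only a finite sum is interchanged with the integral, no convergence issue arises, and the Beta-function evaluation is a standard fact about Euler integrals, so there is essentially no obstacle. If one prefers to stay within the elementary, generating-function style of the preceding Lemma~\ref{lem:Combin1}, an alternative route is to proceed by induction on $r_2$, using Pascal's rule $\binom{r_2}{i}=\binom{r_2-1}{i}+\binom{r_2-1}{i-1}$ and the telescoping identity $\frac{1}{r_1+i+1}-\frac{1}{r_1+i+2}=\frac{1}{(r_1+i+1)(r_1+i+2)}$ to reduce the sum for $(r_1,r_2)$ to the one for $(r_1+1,r_2-1)$; the base case $r_2=0$ is trivial. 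Either route yields the stated equality cleanly.
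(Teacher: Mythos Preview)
Your proposal is correct and takes essentially the same approach as the paper: both rewrite the sum as $\int_0^1 x^{r_1}(1-x)^{r_2}\,dx$ and evaluate it via the Beta function $B(r_1+1,r_2+1)$. The paper packages the first step slightly differently (defining $F(x)=\sum_i(-1)^i\binom{r_2}{i}\frac{x^{r_1+i+1}}{r_1+i+1}$ and computing $F'(x)=x^{r_1}(1-x)^{r_2}$), but the content is identical to your swap of the finite sum with the integral.
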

\begin{proof} Define
\begin{equation*}
F(x)=\sum_{i=0}^{r_2}(-1)^i\binom{r_2}{i}\frac{x^{r_1+i+1}}{r_1+i+1}  .
\end{equation*}
Then
\begin{equation*}
F'(x)=\sum_{i=0}^{r_2}(-1)^i\binom{r_2}{i} x^{r_1+i}=x^{r_1} (1-x)^{r_2}.
\end{equation*}
Thus using the beta function $B(a,b)=\int_0^1 x^{a-1}(1-x)^{b-1} \, dx=\gG(a)\gG(b)/\gG(a+b)$ we have
\begin{equation*}
\sum_{i=0}^{r_2}(-1)^i\binom{r_2}{i}\frac{1}{r_1+i+1}
=\int_0^1 x^{r_1} (1-x)^{r_2} \, dx=B(r_1+1,r_2+1)=\frac{ r_1!r_2!}{(r_1+r_2+1)!}
\end{equation*}
as desired.
\end{proof}
Given any function $f(x,y)$ we define
$$\Cyc{x,y} f(x,y)=f(x,y)+f(y,x).$$
\begin{prop}\label{prop:Z2general}
For all nonnegative integers $r_1$ and $r_2$ we have
\begin{equation*}
Z_2(s;r_1,r_2)=\frac{ r_1!r_2!}{(r_1+r_2+1)!} f_{0}(s;r_1,r_2)+
\Cyc{r_1,r_2} (-1)^{r_1}\sum_{k=r_1+1}^{r_1+r_2+1}\binom{r_2}{k-r_1-1}\beta_k f_{k}(s;r_1,r_2),
\end{equation*}
where $\beta_k=B_k/k$.
\end{prop}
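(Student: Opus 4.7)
The plan is to start from~\eqref{equ:Z2step1} and swap the order of summation so that $k$ becomes the outer index and $i$ the inner one. The $k=0$ contribution isolates as $\sum_{i=0}^{r_2}(-1)^i\binom{r_2}{i}\tfrac{1}{r_1+i+1}f_0(s;r_1,r_2)$, which Lemma~\ref{lem:Combin2} collapses to the first term $\tfrac{r_1!r_2!}{(r_1+r_2+1)!}f_0(s;r_1,r_2)$ of the claimed formula. For $k\geq 1$ I would first rewrite $\tfrac{1}{r_1+i+1}\binom{r_1+i+1}{k}=\tfrac{1}{k}\binom{r_1+i}{k-1}$ so the inner sum matches the shape required by Lemma~\ref{lem:Combin1}.

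The original inner range inherited from~\eqref{equ:Z2step1} is $i\geq \max(0,k-r_1)$; I would extend it to $i=0,\dots,r_2$ so that Lemma~\ref{lem:Combin1} applies directly. The added terms with $i<k-r_1-1$ vanish because $\binom{r_1+i}{k-1}=0$ there, and the only surviving extra term, at $i=k-r_1-1$ (present exactly when $k\geq r_1+1$), contributes $(-1)^{k-r_1-1}\binom{r_2}{k-r_1-1}$. Subtracting this from the closed form furnished by Lemma~\ref{lem:Combin1}, and using $B_k=0$ for odd $k\geq 3$, the coefficient of $\beta_k f_k(s;r_1,r_2)$ for each even $k\geq 2$ becomes
\[
(-1)^{r_2}\binom{r_1}{k-r_2-1}+(-1)^{r_1}\binom{r_2}{k-r_1-1},
\]
which is precisely the expression one reads off by expanding $\Cyc{r_1,r_2}$ in the claim.

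The last step is to reconcile the boundary indices. The correction $-\delta_{r_2,0}\zeta(s-r_1-r_2)=-\delta_{r_2,0}f_1(s;r_1,r_2)$ in~\eqref{equ:Z2step1} must be combined with the $k=1$ output of the double sum (where $\beta_1=-\tfrac12\ne 0$) to reproduce the $k=1$ piece of the Cyc expression; a short case analysis in $\{r_1\ge 1,\,r_2\ge 1\}$, $\{r_1r_2=0,\ (r_1,r_2)\ne(0,0)\}$, and $\{r_1=r_2=0\}$ verifies the match in each case. One must also check that the ``extra'' index $k=r_1+r_2+1$ appearing in the Cyc range but absent from the swapped sum carries coefficient $\bigl((-1)^{r_1}+(-1)^{r_2}\bigr)\beta_{r_1+r_2+1}$: this is nonzero only if $r_1,r_2$ share parity, forcing $r_1+r_2+1$ odd so $\beta_{r_1+r_2+1}=0$ (the lone exception $r_1=r_2=0$ is already absorbed into the $\delta_{r_2,0}$ term above). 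The main obstacle is exactly this boundary bookkeeping at $k=1$ and $k=r_1+r_2+1$; away from these indices the identification follows immediately from Lemmas~\ref{lem:Combin1} and~\ref{lem:Combin2}.
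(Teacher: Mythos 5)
Your proposal is correct and is essentially the paper's own argument: the terms you add when extending the inner $i$-range to $[0,r_2]$ (only $i=k-r_1-1$ surviving) are exactly the $k=r_1+i+1$ terms the paper adds when extending the inner $k$-range to $r_1+r_2+1$, and both proofs then invoke Lemmas~\ref{lem:Combin1} and~\ref{lem:Combin2} in the same roles. Your boundary bookkeeping at $k=1$ and $k=r_1+r_2+1$ (via $B_{2m+1}=0$, $\beta_1=-\tfrac12$, and the $\delta_{r_2,0}$ term) matches the paper's observation that $(-1)^k\beta_k=\beta_k$ except at $k=1$, which occurs only when $r_2=0$.
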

\begin{proof}
 By Eq.~\eqref{equ:Z2step1} we get
{\allowdisplaybreaks
\begin{align*}
Z_{2}(s;r_1,r_2)=&\sum_{i=0}^{r_2}(-1)^i\binom{r_2}{i}
 \sum_{k=0}^{r_1+r_2+1}\binom{r_1+i+1}{k}\frac{(-1)^k B_k}{r_1+i+1}f_{k}(s;r_1,r_2)\\
&+\sum_{i=0}^{r_2}(-1)^{r_1}\binom{r_2}{i}\frac{B_{r_1+i+1}}{r_1+i+1}
    f_{r_1+i+1}(s;r_1,r_2)-\gd_{r_2,0}\zeta(s-r_1-r_2)\\
=&\sum_{i=0}^{r_2}(-1)^i\binom{r_2}{i}\frac{1}{r_1+i+1}f_{0}(s;r_1,r_2)
+ \sum_{i=0}^{r_2}(-1)^{r_1}\binom{r_2}{i}\beta_{r_1+i+1}f_{r_1+i+1}(s;r_1,r_2) \\
 &+\sum_{i=0}^{r_2}(-1)^i\binom{r_2}{i}\sum_{k=1}^{r_1+r_2+1}
 \binom{r_1+i}{k-1} (-1)^k \beta_k f_{k}(s;r_1,r_2)-\gd_{r_2,0}\zeta(s-r_1-r_2).
\end{align*}}
By Lemma \ref{lem:Combin1} and Lemma~\ref{lem:Combin2} we have
\begin{align*}
Z_2(s;r_1,r_2)=&\frac{ r_1!r_2!}{(r_1+r_2+1)!} f_{0}(s;r_1,r_2)+
(-1)^{r_1}\sum_{k=r_1+1}^{r_1+r_2+1}\binom{r_2}{k-r_1-1}\beta_k f_{k}(s;r_1,r_2)\\
 +&(-1)^{r_2}\sum_{k=r_2+1}^{r_1+r_2+1}\binom{r_1}{k-r_2-1} (-1)^k \beta_k f_{k}(s;r_1,r_2)-\gd_{r_2,0}\zeta(s-r_1-r_2).
\end{align*}
Observe that $(-1)^k \beta_k=\beta_k$ unless $k=1$, and this term
occurs if and only if $r_2=0$. Further $\beta_1=-\beta_1-1$. So by combining the last two parts of the above equation, we can get the result of the proposition.
\end{proof}

In order to find the weighted sum formulas for DZVs
we first consider the corresponding result for Bernoulli numbers.
\begin{thm}\label{thm:myGenBernoulliWtSumDepth2}
For all nonnegative integers $r_1$, $r_2$ and $n\ge r_1+r_2+2$ we have
\begin{align*}
\ & \sum_{k=1}^{2n-1}
    \frac{B_{k}}{k!}\frac{B_{2n-k}}{(2n-k)!} \prod_{a=1}^{r_1}(k-a)\prod_{b=1}^{r_2}(2n-k-b) \\
=& \sum_{k=r_1+1}^{2n-r_2-1}
    \frac{B_{k}}{k!}\frac{B_{2n-k}}{(2n-k)!} \prod_{a=1}^{r_1}(k-a)\prod_{b=1}^{r_2}(2n-k-b) \\
=&-r_1!r_2!\left(\binom{2n-1}{r_1+r_2+1}+\Cyc{r_1,r_2} (-1)^{r_2} \binom{2n-1}{r_1}\right)\frac{B_{2n}}{(2n)!} \\
& -r_1!r_2!\, \Cyc{r_1,r_2}(-1)^{r_1}\sum_{k=r_1+1}^{r_1+r_2+1}
\binom{2n-1-k}{r_1+r_2+1-k}\binom{k-1}{r_1}
 \frac{B_{k} }{k!}  \frac{B_{2n-k}}{(2n-k)!}.
\end{align*}
\end{thm}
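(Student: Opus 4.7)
The plan follows Eie's strategy of evaluating $Z_2(s;r_1,r_2)$ at a well-chosen negative integer by two distinct methods and matching the results. The appropriate specialization is $s_0 := r_1+r_2+2-2n$, chosen so that every shifted argument $s_0+k-r_1-r_2-1 = k+1-2n$ appearing in Proposition~\ref{prop:Z2general} is a non-positive integer where the functional equation $\zeta(1-m) = -B_m/m$ applies (with the trivial zeros of $\zeta$ absorbed into $B_{2m+1}=0$ for $m\ge 1$). Substituting $s=s_0$ directly into Proposition~\ref{prop:Z2general} yields the first evaluation
$$Z_2(s_0;r_1,r_2) = -\frac{r_1!r_2!}{(r_1+r_2+1)!}\frac{B_{2n}}{2n} - \Cyc{r_1,r_2}(-1)^{r_1}\sum_{k=r_1+1}^{r_1+r_2+1}\binom{r_2}{k-r_1-1}\frac{B_k B_{2n-k}}{k(2n-k)}.$$

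For the second evaluation I would use the Mellin integral representation
$$Z_2(s;r_1,r_2) = \frac{1}{\Gamma(s)}\int_0^\infty t^{s-1} A_{r_1}(t)A_{r_2}(t)\,dt,\qquad A_r(t) := \sum_{m\ge 1}m^r e^{-mt} = \left(-\frac{d}{dt}\right)^r\frac{1}{e^t-1}.$$
A standard contour/residue argument yields $Z_2(-N;r_1,r_2) = (-1)^N N!\,[t^N]\bigl(A_{r_1}(t)A_{r_2}(t)\bigr)$ with $N := 2n-r_1-r_2-2 \ge 0$. Expanding $1/(e^t-1) = \sum_{j\ge 0}B_j t^{j-1}/j!$ gives the Laurent series $A_r(t) = r!\,t^{-r-1} + (-1)^r\sum_{l\ge 0}\frac{B_{l+r+1}}{(l+r+1)l!}t^l$, and the coefficient $[t^N](A_{r_1}A_{r_2})$ splits into three pieces: the Cauchy convolution of the two analytic parts, which after reindexing $k=l_1+r_1+1$ produces the bulk sum $\sum_{k=r_1+1}^{2n-r_2-1} B_kB_{2n-k}/[k(2n-k)(k-r_1-1)!(2n-k-r_2-1)!]$ that matches $N!^{-1}$ times the LHS of the theorem once $\prod_{a=1}^{r_1}(k-a)\prod_{b=1}^{r_2}(2n-k-b)/[k!(2n-k)!]$ is simplified; and two symmetric cross-terms where the polar part $r_i!\,t^{-r_i-1}$ of one factor pairs with the analytic part of the other, each producing a single $B_{2n}$ contribution.

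Equating the two evaluations and dividing by $N!$, the three $B_{2n}$ terms merge into $-r_1!r_2!\bigl(\binom{2n-1}{r_1+r_2+1} + \Cyc{r_1,r_2}(-1)^{r_2}\binom{2n-1}{r_1}\bigr)B_{2n}/(2n)!$ via the identity $1/[2n(2n-r-1)!] = r!\binom{2n-1}{r}/(2n)!$, while the boundary cyclic sum rearranges to the stated form via $\binom{r_2}{k-r_1-1}/N! = r_1!r_2!\binom{2n-1-k}{r_1+r_2+1-k}\binom{k-1}{r_1}/[(k-1)!(2n-k-1)!]$. The main technical point is justifying the Mellin-residue formula in the second evaluation (a standard contour argument against the Laurent expansion of $A_{r_1}A_{r_2}$); beyond that, the proof is routine factorial and binomial bookkeeping.
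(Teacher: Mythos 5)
Your argument is correct, and it reaches the theorem by a genuinely different route at the one non-routine step. The paper's proof also evaluates $Z_2(s;r_1,r_2)$ at $s_0=r_1+r_2+2-2n$ in two ways and uses Proposition~\ref{prop:Z2general} for the first evaluation exactly as you do; but for the second evaluation it imports Eie's machinery wholesale, namely the simplex-integral formula \eqref{equ:Eie'sDepth2Case} with the operators $J^2$ and $J^1$ of \eqref{equ:Eie'sJJ2}, computing the $J^1$-piece by the beta-integral substitution $y=-xt$. You replace this with the classical Mellin representation $Z_2(s;r_1,r_2)=\Gamma(s)^{-1}\int_0^\infty t^{s-1}A_{r_1}(t)A_{r_2}(t)\,dt$ and the standard fact that such an integral at $s=-N$ equals $(-1)^NN!$ times the $N$-th Laurent coefficient of the integrand's kernel. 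I checked that your Laurent expansion $A_r(t)=r!\,t^{-r-1}+(-1)^r\sum_{l\ge0}\frac{B_{l+r+1}}{(l+r+1)\,l!}t^l$ is right (the terms $1\le j\le r$ of $1/(e^t-1)$ are killed by the factor $j-j$ in the falling product), and your three pieces of $[t^N](A_{r_1}A_{r_2})$ reproduce, after multiplying by $(-1)^NN!=(-1)^{r_1+r_2}N!$, precisely the paper's expression $\sum_{a+b=-s}\frac{(-s)!}{a!b!}\beta_{a+r_1+1}\beta_{b+r_2+1}+\Cyc{r_1,r_2}\frac{(-1)^{r_2}r_2!(-s)!}{(2n-r_1-1)!}\beta_{2n}$; the polar-times-polar term contributes nothing since $N\ge0$, matching the absence of such a term in Eie's formula. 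The final binomial bookkeeping you describe is the same as the paper's and your two conversion identities check out. What your approach buys is self-containedness and elementarity: you need only the uniqueness of analytic continuation (to reconcile the two continuations) and the textbook $\Gamma$-residue computation, rather than citing \cite{Eie1996,Eie2009} as a black box; what it costs is that you must actually justify the contour/splitting argument you defer, though that is routine. Two small points worth making explicit: the first equality in the theorem statement (extending the summation range to $1\le k\le 2n-1$) holds simply because $\prod_{a=1}^{r_1}(k-a)$ vanishes for $k\le r_1$ and $\prod_{b=1}^{r_2}(2n-k-b)$ vanishes for $k\ge 2n-r_2$; and one should note that at $s=s_0$ every $\zeta$-argument in Proposition~\ref{prop:Z2general} is a negative odd/even integer away from the pole, so the first evaluation is legitimate.
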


\begin{proof}
Let $s=r_1+r_2+2-2n$. By Proposition \ref{prop:Z2general} we have
\begin{equation}\label{equ:Si-Siii}
Z_2(s;r_1,r_2)=\frac{ -r_1!r_2!\beta_{2n}}{(r_1+r_2+1)!}-
\Cyc{r_1,r_2}(-1)^{r_1}\sum_{k=r_1+1}^{r_1+r_2+1}\binom{r_2}{k-r_1-1}\beta_k \beta_{2n-k}.
\end{equation}
On the other hand Eie \cite{Eie1996,Eie2009} showed that functions like $Z_r(s)$ have
analytic continuations over the whole complex plane and further they are defined at
negative integers. In fact our expressions in \eqref{equ:Si-Siii} shows clearly that this can
be done using the Riemann zeta function. More importantly, Eie showed that these special
values at negative integers can also be computed using some integrals over clearly
specified simplices.
In our situation Eie's theory implies that
\begin{equation}\label{equ:Eie'sDepth2Case}
   Z_2(s;r_1,r_2)=J^2\Big(x^{r_1}y^{r_2}(x+y)^{-s}\Big)+\Cyc{r_1,r_2} J^1\Big(\int_0^{-x} x^{r_1}y^{r_2}(x+y)^{-s} \,dy\Big),
\end{equation}
where for positive integers $a_j$ ($j=1,\dots,m$)
\begin{equation}\label{equ:Eie'sJJ2}
J^m(x_1^{a_1}\dots x_m^{a_m})= \prod_{j=1}^m (-1)^{a_j}\beta_{a_j+1}.
\end{equation}
For the integral we may use substitution $y=-xt$ to get
\begin{align}\label{equ:eq00}
\int_0^{-x} x^{r_1}y^{r_2}(x+y)^{2n-r_1-r_2-2}\, dy=&
(-1)^{r_2+1} x^{2n-1}\int_0^{1} t^{r_2} (1-t)^{2n-r_1-r_2-2}\,dt \\
=& (-1)^{r_2+1} x^{2n-1}\frac{r_2!(2n-r_1-r_2-2)!}{(2n-r_1-1)!}.
\end{align}
Applying the operator $J$ we get
\begin{equation*}
  Z_2(s;r_1,r_2)=\sum_{a+b=-s} \frac{(-s)!}{a!b!}
\beta_{a+r_1+1}\beta_{b+r_2+1}
+\Cyc{r_1,r_2}\frac{(-1)^{r_2}r_2!(-s)!}{(2n-r_1-1)!}\beta_{2n}.
\end{equation*}
By comparing this with Eq.~\eqref{equ:Eie'sDepth2Case} we have
\begin{align*}
\ & \sum_{k=r_1+1}^{2n-r_2-1} \frac{(2n-r_1-r_2-2)!}{(k-r_1-1)!(2n-k-r_2-1)!}
    \frac{B_{k}}{k}\frac{B_{2n-k}}{2n-k} \\
=&\left(\frac{-r_1!r_2!}{(r_1+r_2+1)!}-\Cyc{r_1,r_2}\frac{ (-1)^{r_2}r_2!(2n-r_1-r_2-2)!}{(2n-r_1-1)!}\right)\frac{B_{2n}}{2n} \\
& -\Cyc{r_1,r_2}(-1)^{r_1}\sum_{k=r_1+1}^{r_1+r_2+1}\binom{r_2}{k-r_1-1}
 \frac{B_{k} }{k}  \frac{B_{2n-k}}{2n-k}.
\end{align*}
The theorem follows immediately.
\end{proof}

\begin{cor}\label{cor:myGenBernoulliWtSumDepth2r1ner2}
For all nonnegative integers $r_1$, $r_2$ and $n\ge r_1+r_2+2$ we have
\begin{align*}
\ & \sum_{k=\lceil (r_1+r_2)/2\rceil+1}^{n-\lceil (r_1+r_2)/2\rceil-1}
    \frac{B_{2k}}{(2k)!}\frac{B_{2j}}{(2j)!} \binom{2k-1}{r_1}  \binom{2j-1}{r_2}  \qquad (\text{here }j=n-k)\\
=&-\left(\binom{2n-1}{r_1+r_2+1}+\Cyc{r_1,r_2} (-1)^{r_2} \binom{2n-1}{r_1}\right)\frac{B_{2n}}{(2n)!} \\
& -\Cyc{r_1,r_2}\sum_{k=\lceil r_1/2\rceil+1}^{\lfloor(r_1+r_2+1)/2\rfloor}
\binom{2k-1}{r_1}\left( (-1)^{r_1}\binom{2j-1}{r_1+r_2+1-2k}
 +  \binom{2j-1}{r_2} \right) \frac{B_{2k} }{(2k)!}  \frac{B_{2j}}{(2j)!}.
\end{align*}
\end{cor}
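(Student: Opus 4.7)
The plan is to deduce the corollary from Theorem~\ref{thm:myGenBernoulliWtSumDepth2} by restricting the sums to terms with nonvanishing Bernoulli products and rearranging the boundary terms symmetrically.

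First I would rewrite $\prod_{a=1}^{r_1}(k-a) = r_1!\binom{k-1}{r_1}$ and $\prod_{b=1}^{r_2}(2n-k-b) = r_2!\binom{2n-k-1}{r_2}$, then divide both sides of the theorem by $r_1!r_2!$. Since $B_m=0$ for odd $m\ge 3$ and $B_{2n-1}=0$ for $n\ge 2$, the only contributions come from even $k$; the would-be edge terms $k=1$ and $k=2n-1$ vanish thanks to $B_{2n-1}=0$. Setting $k=2k'$ and $j=n-k'$ converts every $k$-sum to a sum over $k'$ with integrand $\binom{2k'-1}{r_1}\binom{2j-1}{r_2}\frac{B_{2k'}}{(2k')!}\frac{B_{2j}}{(2j)!}$, and the range $k\in[r_1+1,2n-r_2-1]$ becomes $k'\in[\lfloor r_1/2\rfloor+1,\,n-\lfloor r_2/2\rfloor-1]$.

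Next I would split this restricted left-hand side over $k'$ into a ``bulk'' piece $k'\in[\lceil(r_1+r_2)/2\rceil+1,\,n-\lceil(r_1+r_2)/2\rceil-1]$---which is exactly the LHS of the corollary---together with two ``tail'' pieces near the endpoints. Under the involution $k'\leftrightarrow n-k'$, the high-$k'$ tail of the $(r_1,r_2)$ problem maps onto the low-$k'$ tail of the $(r_2,r_1)$ problem with the binomials swapped, so applying $\Cyc{r_1,r_2}$ to the low-$k'$ tail captures both boundary pieces. Moving these tails from the LHS to the RHS produces the summand $\binom{2j-1}{r_2}$ sitting inside the parenthesis of the corollary's $\Cyc{r_1,r_2}$-sum, while the correction $\Cyc{r_1,r_2}$-sum of Theorem~\ref{thm:myGenBernoulliWtSumDepth2}, restricted to even $k$ via $k=2k'$, supplies the other summand $(-1)^{r_1}\binom{2j-1}{r_1+r_2+1-2k}$ (the sign $(-1)^{r_1}$ is inherited directly since $(-1)^{2k'}=1$). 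The constant part $\bigl(\binom{2n-1}{r_1+r_2+1}+\Cyc{r_1,r_2}(-1)^{r_2}\binom{2n-1}{r_1}\bigr)\tfrac{B_{2n}}{(2n)!}$ is carried over unchanged from the theorem.

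The principal obstacle is the index bookkeeping: the parity-restricted range partitions into a bulk plus two tails whose lengths may differ when $r_1\neq r_2$, and one must verify that the $\Cyc{r_1,r_2}$-symmetrization correctly fuses the low-$k'$ tail of the original problem with the low-$k'$ tail of the swapped problem to reproduce all boundary contributions at their proper multiplicity (a small check of the identity $\lfloor(r_1+r_2+1)/2\rfloor=\lceil(r_1+r_2)/2\rceil$ is needed to match the upper bound of the corollary's tail sum). Once these parity, range, and sign checks are in place, comparing the two reorganized identities yields the corollary.
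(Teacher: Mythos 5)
Your proposal is correct and follows essentially the same route as the paper: the paper likewise splits the theorem's sum into the bulk range $k\in[r_1+r_2+2,\,2n-r_1-r_2-2]$ plus two tails, folds the upper tail into the $\Cyc{r_1,r_2}$ of the lower one via $k\mapsto 2n-k$, restricts to even indices, and carries the theorem's correction sum over as the $(-1)^{r_1}\binom{2j-1}{r_1+r_2+1-2k}$ summand. The one detail your ``parity, range, and sign checks'' must still include is the lower limit of the tail sum: the natural starting index is $\lceil (r_1+1)/2\rceil=\lfloor r_1/2\rfloor+1$, and when $r_1$ is odd the term at $k=(r_1+1)/2$ has $r_1+r_2+1-2k=r_2$, so the parenthesis becomes $\bigl((-1)^{r_1}+1\bigr)\binom{2j-1}{r_2}=0$ and the limit can be raised to $\lceil r_1/2\rceil+1$ as stated in the corollary.
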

\begin{proof} By breaking the left sum in Theorem~\ref{thm:myGenBernoulliWtSumDepth2} as
\begin{equation*}
      \sum_{k=r_1+1}^{2n-r_2-1}= \sum_{k=r_1+1}^{r_1+r_2+1}+\sum_{k=r_1+r_2+2}^{2n-r_1-r_2-2}+\sum_{k=2n-r_1-r_2-1}^{2n-r_2-1}
\end{equation*}
we see easily that
\begin{align*}
\ & \sum_{k=\lceil (r_1+r_2)/2\rceil+1}^{n-\lceil (r_1+r_2)/2\rceil-1}
    \frac{B_{2k}}{(2k)!}\frac{B_{2n-2k}}{(2n-2k)!} \prod_{a=1}^{r_1}(2k-a)\prod_{b=1}^{r_2}(2n-2k-b) \\
=&-r_1!r_2!\left(\binom{2n-1}{r_1+r_2+1}+\Cyc{r_1,r_2} (-1)^{r_2} \binom{2n-1}{r_1}\right)\frac{B_{2n}}{(2n)!} \\
& -r_1!r_2!\, \Cyc{r_1,r_2}\sum_{k=\lceil (r_1+1)/2\rceil}^{\lfloor(r_1+r_2+1)/2\rfloor}
\binom{2k-1}{r_1}\left( (-1)^{r_1}\binom{2j-1}{r_1+r_2+1-2k}
 +  \binom{2j-1}{r_2} \right) \frac{B_{2k} }{(2k)!}  \frac{B_{2j}}{(2j)!}.
\end{align*}
Notice that in the last summation if $r_1$ is odd and $k=(r_1+1)/2$
then the corresponding term happens to be zero.
Thus we can improve the lower limit of $k$ from $\lceil (r_1+1)/2\rceil$ to $\lceil r_1/2\rceil+1$.
This finishes the proof of the corollary.
\end{proof}

By Eq.~\eqref{equ:EulerFamousID} and Corollary~\ref{cor:myGenBernoulliWtSumDepth2r1ner2}, we can get the following Corollary
\begin{cor}\label{cor:myGenBernoulliWtSumDepth2r1ner2Zeta}
For all nonnegative integers $r_1$, $r_2$ and $n\ge r_1+r_2+2$ we have
{\small
\begin{align*}
\ & \sum_{k=\lceil (r_1+r_2)/2\rceil+1}^{n-\lceil (r_1+r_2)/2\rceil-1}
   \binom{2k-1}{r_1}  \binom{2(n-k)-1}{r_2}  \zeta(2(n-k)) \zeta(2k)\\
=&\frac12\left(\binom{2n-1}{r_1+r_2+1}+\Cyc{r_1,r_2} (-1)^{r_2} \binom{2n-1}{r_1}\right)\zeta(2n) \\
& -\Cyc{r_1,r_2}\hspace{-.3cm}\sum_{k=\lceil r_1/2\rceil+1}^{\lfloor(r_1+r_2+1)/2\rfloor}
\binom{2k-1}{r_1}\left( (-1)^{r_1}\binom{2(n-k)-1}{r_1+r_2+1-2k}
 +  \binom{2(n-k)-1}{r_2} \right) \zeta(2(n-k))\zeta(2k) .
\end{align*}
}
\end{cor}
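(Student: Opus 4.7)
The plan is straightforward: apply Euler's classical evaluation in Eq.~\eqref{equ:EulerFamousID} to convert Corollary~\ref{cor:myGenBernoulliWtSumDepth2r1ner2} term-by-term from an identity of Bernoulli numbers into the claimed identity of products of zeta values. The key observation is that every term in Corollary~\ref{cor:myGenBernoulliWtSumDepth2r1ner2} is homogeneous of total Bernoulli-weight $2n$: it is either a product $\bigl(B_{2k}/(2k)!\bigr)\bigl(B_{2j}/(2j)!\bigr)$ with $k+j=n$, or the isolated factor $B_{2n}/(2n)!$. Consequently, factors of $(2\pi i)^{2n}$ appear uniformly and can be cleared in one stroke.

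First I would rewrite Euler's formula \eqref{equ:EulerFamousID} as
\[
\frac{B_{2m}}{(2m)!}=-\frac{2\,\zeta(2m)}{(2\pi i)^{2m}},
\]
which, when applied to a product of two such factors with $k+j=n$, gives
\[
\frac{B_{2k}}{(2k)!}\cdot\frac{B_{2j}}{(2j)!}=\frac{4\,\zeta(2k)\zeta(2j)}{(2\pi i)^{2n}}.
\]
Then I would multiply both sides of the identity in Corollary~\ref{cor:myGenBernoulliWtSumDepth2r1ner2} by $\tfrac14 (2\pi i)^{2n}$. Under this scaling, every Bernoulli product becomes the corresponding $\zeta$ product $\zeta(2k)\zeta(2j)$, while the lone $B_{2n}/(2n)!$ term becomes $-\zeta(2n)/2$. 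The overall minus sign preceding that term in Corollary~\ref{cor:myGenBernoulliWtSumDepth2r1ner2} combines with this sign flip to produce the displayed coefficient $+\tfrac12$ in front of $\zeta(2n)$.

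The remaining binomial factors, the cyclic operator $\Cyc{r_1,r_2}$, the sign $(-1)^{r_1}$, and the summation ranges all depend only on the indices $k$, $r_1$, $r_2$, $n$ and pass through the substitution unchanged. Thus the identity in Corollary~\ref{cor:myGenBernoulliWtSumDepth2r1ner2Zeta} follows with essentially no further computation. I do not anticipate any real obstacle: the only care required is tracking the signs arising from Euler's formula, which is a finite and mechanical check.
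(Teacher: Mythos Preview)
Your proposal is correct and follows exactly the same route as the paper, which simply notes that the corollary follows from Corollary~\ref{cor:myGenBernoulliWtSumDepth2r1ner2} together with Euler's evaluation~\eqref{equ:EulerFamousID}. You have merely made explicit the sign-tracking that the paper leaves implicit, and your computation of the scaling factor $\tfrac14(2\pi i)^{2n}$ is accurate.
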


In the literature there are many different types of sum formulas for DZV. The following
statement is a kind of weighted sum formula for a product of two Riemann zeta values with
the weight coefficients given by arbitrary polynomials. To derive sum formulas for
DZV one has to symmetrize the coefficient which will be done in the next section.

\begin{thm}\label{thm:myGenWtSumDepth2ZetaPolyCoeff}
Let $F(x,y)\in\Q[x,y]$ be a polynomial of degree $d$. Then for every positive integer $n\ge d+2$ we have
\begin{equation*}
  \sum_{j+k=n}  F(j,k)  \zeta(2j) \zeta(2k) \\
= \sum_{k=0}^{\lfloor(d+1)/2\rfloor} K_{F,k}(n)\zeta(2k)\zeta(2n-2k),
\end{equation*}
where $K_{f,k}(x)$ is a polynomial in $x$ depending only on $F$ and $k$ whose degree is at most $d+1$.
\end{thm}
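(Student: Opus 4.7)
The plan is to expand $F(x,y)$ in a basis suited to Corollary~\ref{cor:myGenBernoulliWtSumDepth2r1ner2Zeta} and then reassemble the resulting identities. By $\Q$-linearity of both sides in $F$, it suffices to prove the identity for $F$ ranging over any $\Q$-basis of the space $\Q[x,y]_{\le d}$ of polynomials of total degree at most $d$. Because $\binom{2x-1}{r}$ is a polynomial in $x$ of degree $r$ with nonzero leading coefficient $2^r/r!$, the collection
\[
\left\{\binom{2x-1}{r_1}\binom{2y-1}{r_2}\ :\ r_1,r_2\ge 0,\ r_1+r_2\le d\right\}
\]
is such a basis. I therefore reduce to the case $F(x,y)=\binom{2x-1}{r_1}\binom{2y-1}{r_2}$ with $r_1+r_2\le d$.

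For this $F$, I split $\sum_{j+k=n} F(j,k)\zeta(2j)\zeta(2k)$ into the \emph{central} range $\lceil(r_1+r_2)/2\rceil+1\le k\le n-\lceil(r_1+r_2)/2\rceil-1$ and the \emph{boundary} range $\min(k,n-k)\le \lceil(r_1+r_2)/2\rceil$. Corollary~\ref{cor:myGenBernoulliWtSumDepth2r1ner2Zeta} rewrites the central portion as a $\Q$-linear combination of $\zeta(2n)$, which I absorb as $-2\zeta(0)\zeta(2n)$ into the $k=0$ summand, and of $\zeta(2k)\zeta(2n-2k)$ for $1\le k\le \lfloor(r_1+r_2+1)/2\rfloor$. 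The boundary terms are transferred to the right-hand side; pairing $k$ with $n-k$, each contributes to a $\zeta(2k)\zeta(2n-2k)$ with $1\le k\le \lceil(r_1+r_2)/2\rceil$ and coefficient $\binom{2k-1}{r_1}\binom{2(n-k)-1}{r_2}+\binom{2(n-k)-1}{r_1}\binom{2k-1}{r_2}$, a polynomial in $n$ of degree at most $\max(r_1,r_2)\le d$ for fixed $k$. Using the identity $\lceil d/2\rceil=\lfloor(d+1)/2\rfloor$, both ranges of $k$ are subsumed in $0\le k\le \lfloor(d+1)/2\rfloor$, matching the statement.

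The degree bound on $K_{F,k}(n)$ can then be read off directly from the corollary. The $k=0$ coefficient involves $\binom{2n-1}{r_1+r_2+1}$ together with $\binom{2n-1}{r_1}$ and $\binom{2n-1}{r_2}$, all polynomials in $n$ of degree at most $r_1+r_2+1\le d+1$. For $k\ge 1$, the central-sum contribution uses $\binom{2(n-k)-1}{r_1+r_2+1-2k}$, of degree $r_1+r_2+1-2k\le d-1$ in $n$, and the boundary contribution has degree at most $\max(r_1,r_2)\le d$; both are $\le d+1$. Taking the $\Q$-linear combination prescribed by the basis expansion of $F$ preserves these degree bounds and yields the claimed polynomials $K_{F,k}(n)$.

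The main obstacle is expository rather than technical: the theorem is essentially a repackaging of Corollary~\ref{cor:myGenBernoulliWtSumDepth2r1ner2Zeta}, but obtaining the clean statement requires choosing the basis $\binom{2x-1}{r_1}\binom{2y-1}{r_2}$ instead of the monomial basis $x^{r_1}y^{r_2}$, so that the corollary applies term by term, and then carefully keeping track of how the various index ranges on the right-hand side fit inside $0\le k\le \lfloor(d+1)/2\rfloor$ and how the binomial coefficients contribute to the degree in $n$.
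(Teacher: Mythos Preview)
Your proof is correct and follows essentially the same approach as the paper: both reduce to Corollary~\ref{cor:myGenBernoulliWtSumDepth2r1ner2Zeta} by expressing the weight polynomial in the basis $\binom{2j-1}{r_1}\binom{2k-1}{r_2}$. The only difference is that the paper reaches this basis by first expanding monomials $j^{m_1}k^{m_2}$ via Stirling numbers of the second kind (writing $(2j)^{m_1}$ as a combination of $\binom{2j}{r}$) and then applying Pascal's rule $\binom{2j}{r}=\binom{2j-1}{r}+\binom{2j-1}{r-1}$, whereas you observe directly that the $\binom{2x-1}{r_1}\binom{2y-1}{r_2}$ already form a basis of $\Q[x,y]_{\le d}$; your route is slightly more economical, and you are also more explicit than the paper about the boundary terms and the degree bookkeeping.
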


\begin{proof}
It is well-known that for any nonnegative integers $m\ge r$ the Stirling numbers of
the second kind $S(m,r)$ are all rational numbers which can be defined by
\begin{equation}\label{equ:Stirling}
x^m= \sum_{r=0}^m r! S(m,r)\binom{x}{r}.
\end{equation}
Thus for any nonnegative integers $r_1$ and $r_2$ we have
\begin{multline*}
j^{m_1} k^{m_2}\zeta(2j) \zeta(2k)=\frac{1}{2^{m_1+m_2}}\sum_{r_1=0}^{m_1} \sum_{r_2=0}^{m_2} r_1! r_2! S(m_1,r_1) S(m_2,r_2)\\
\left(\binom{2j-1}{r_1}+\binom{2j-1}{r_1-1}\right)\left(\binom{2k-1}{r_2}+\binom{2k-1}{r_2-1}\right) \zeta(2j) \zeta(2k).
\end{multline*}
The theorem now follows from Corollary~\ref{cor:myGenBernoulliWtSumDepth2r1ner2Zeta}.
\end{proof}

\begin{eg}
We can obtain the following weighted sum formulas applying Corollary~\ref{cor:myGenBernoulliWtSumDepth2r1ner2Zeta}:
{\allowdisplaybreaks
\begin{align}\label{equ:Prod2RiemannZetaFactor=j}
\sum_{\substack{j+k=n\\ j,k\ge 1}}  j  \zeta(2l)\zeta(2k)= & \frac{n(2n+1)}4 \zeta(2n)          ,\\
\sum_{\substack{j+k=n\\ j,k\ge 1}}  j^2 \zeta(2l)\zeta(2k)= &
    \frac{n(2n+1)(4n+1)}{24} \zeta(2n)-\frac{2n-3}{2} \zeta(2) \zeta(2n-2)           ,\notag \\
\label{equ:Prod2RiemannZetaFactor=j3}
\sum_{\substack{j+k=n\\ j,k\ge 1}}  j^3 \zeta(2l)\zeta(2k)= &
    \frac{n^2(2n+1)^2}{16} \zeta(2n) -\frac{3n(2n-3)}{4} \zeta(2) \zeta(2n-2)        ,\\
\sum_{\substack{j+k=n\\ j,k\ge 1}}  j^4 \zeta(2l)\zeta(2k)= &
    \frac{n(2n+1)(4n+1)(12n^2+6n-1) \zeta(2n)}{480}                     \notag\\
    -& \frac{(2n-3)(8n^2-6n+5)}{8}\zeta(2) \zeta(2n-2)
    -\frac{3(2n-5)}{2} \zeta(4)\zeta(2n-4)        .\notag
\end{align}}
\end{eg}

\section{Weighted sum formulas for double zeta values}
\label{sec:double}
In this section we apply results from the last section to give weighted sum formula
for DZVs $\zeta(2j,2k)$ for fixed $j+k$ when the weight factors are arbitrary
symmetric polynomials in $j$ and $k$. First, by setting $r_1=r_2$ in
Corollary~\ref{cor:myGenBernoulliWtSumDepth2r1ner2} we obtain immediately
\begin{prop}\label{prop:myGenBernoulliWtSumDepth2}
For all nonnegative integers $r$ and $n\ge 2r+2$ we have
\begin{align*}
\ & \sum_{k=r+1}^{n-r-1} \frac{(2n-2r-2)!}{(2k-r-1)!(2n-2k-r-1)!}
    \frac{B_{2k}}{2k}\frac{B_{2n-2k}}{2n-2k}  \\
=&-\left(\frac{2(-1)^{r}r!(2n-2r-2)!}{(2n-r-1)!}+\frac{(r!)^2}{(2r+1)!}\right)\frac{B_{2n}}{2n}
        \notag \\
&- \sum_{k=\lceil r/2\rceil+1}^r \frac{2}{(2k-r-1)!}
\left\{\frac{ (-1)^rr!}{(2r-2k+1)!}+\frac{(2n-2r-2)!}{(2n-2k-r-1)!}\right\}
 \frac{B_{2k} }{2k}  \frac{B_{2n-2k}}{2n-2k}.
\end{align*}
\end{prop}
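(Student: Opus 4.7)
The plan is to obtain Proposition~\ref{prop:myGenBernoulliWtSumDepth2} as a direct specialization of Corollary~\ref{cor:myGenBernoulliWtSumDepth2r1ner2} at $r_1=r_2=r$, followed by a rewriting of the resulting binomial identity in the factorial form displayed in the proposition. First I would observe that when $r_1=r_2$, the summand inside the $\Cyc{r_1,r_2}$ operator is invariant under $r_1\leftrightarrow r_2$, so the operator contributes a factor of $2$. The outer range $\lceil(r_1+r_2)/2\rceil+1\le k\le n-\lceil(r_1+r_2)/2\rceil-1$ collapses to $r+1\le k\le n-r-1$, and the inner range $\lceil r_1/2\rceil+1\le k\le \lfloor(r_1+r_2+1)/2\rfloor$ collapses to $\lceil r/2\rceil+1\le k\le r$, matching the index ranges in the proposition.

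Next I would multiply both sides of the specialized identity by $(r!)^2(2n-2r-2)!$ and convert each binomial coefficient using $\binom{m}{\ell}=\frac{m!}{\ell!(m-\ell)!}$, together with $\frac{B_m}{m!}=\frac{B_m}{m(m-1)!}$. The key bookkeeping observation is that, with $j=n-k$, one has $2j-(2r+1-2k)-1=2n-2r-2$, so the factorial $(2n-2r-2)!$ introduced by the multiplication cancels exactly against the denominator arising from $\binom{2j-1}{2r+1-2k}$. This produces the $\frac{(-1)^r r!}{(2k-r-1)!(2r-2k+1)!}$ contribution inside the braces of the proposition. The parallel conversion of the $\binom{2j-1}{r}$ term yields the $\frac{(2n-2r-2)!}{(2k-r-1)!(2n-2k-r-1)!}$ contribution, and the corresponding conversions of $\binom{2n-1}{2r+1}$ and $\binom{2n-1}{r}$ on the right-hand side produce the two summands $\frac{(r!)^2}{(2r+1)!}$ and $\frac{2(-1)^r r!(2n-2r-2)!}{(2n-r-1)!}$ multiplying $B_{2n}/(2n)$.

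I expect no genuine obstacle beyond careful bookkeeping: tracking the powers of $r!$, the $(-1)^r$ signs, and the reindexing between the normalisations $\binom{2k-1}{r}\binom{2n-2k-1}{r}\frac{B_{2k}}{(2k)!}\frac{B_{2n-2k}}{(2n-2k)!}$ on one side and $\frac{1}{(2k-r-1)!(2n-2k-r-1)!}\frac{B_{2k}}{2k}\frac{B_{2n-2k}}{2n-2k}$ on the other. Once these matchings are checked term by term, Proposition~\ref{prop:myGenBernoulliWtSumDepth2} follows from Corollary~\ref{cor:myGenBernoulliWtSumDepth2r1ner2} without any further input.
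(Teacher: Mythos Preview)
Your proposal is correct and matches the paper's approach exactly: the paper states the proposition as an immediate consequence of setting $r_1=r_2=r$ in Corollary~\ref{cor:myGenBernoulliWtSumDepth2r1ner2}, and your write-up carries out precisely that specialization together with the factorial rewriting you describe.
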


\begin{eg}
Taking $r=1$ we recover \cite[Proposition 1]{Eie1996} (notice it
has a typo: $(2n-2)!$ on the left numerator
should be $(2n-4)!$). Taking $r=2$ we recover
\cite[Proposition~4.2.2]{Eie2009}.
Taking $r=3$ in Proposition~\ref{prop:myGenBernoulliWtSumDepth2} we get
\begin{align*}
\ & \sum_{k=4}^{n-4} \frac{(2n-8)!}{(2k-4)!(2n-2k-4)!}
    \frac{B_{2k}}{2k}\frac{B_{2n-2k}}{2n-2k} \notag\\
=&-\frac{(n-6)(2n+1)(2n^2-11n+35)}{140(n-2)(n-3)(2n-5)(2n-7)}\frac{B_{2n}}{2n}
-\frac{2n-11}{6} B_6 B_{2n-6}, \quad n\ge 8.
\end{align*}
\end{eg}

By  Eq.~\eqref{equ:EulerFamousID} and Proposition~\ref{prop:myGenBernoulliWtSumDepth2}, we obtain
\begin{cor}\label{cor:myGenMZVWtSum}
For all  nonnegative integers $r$ and $n\ge 2r+2$ we have
\begin{align}
\ & \sum_{k=r+1}^{n-r-1} \zeta(2k) \zeta(2n-2k) \prod_{\ga=1}^r \Big\{(2k-\ga)(2n-2k-\ga)\Big\} \notag\\
=&\left( (-1)^{r}r!+\frac{(r!)^2}{2(2r+1)!}\prod_{\gb=r+1}^{2r+1} (2n-\gb) \right)
\zeta(2n) \prod_{\ga=1}^r(2n-\ga)             \label{equ:myGenZetaWtSum}     \\
-& \sum_{k=\lceil r/2\rceil+1}^r \prod_{\ga=1}^r(2k-\ga) \prod_{\gb=2k+1}^{2r+1} (2n-\gb)
\left\{\frac{ 2 (-1)^rr!}{(2r-2k+1)!}+\frac{2 (2n-2r-2)!}{(2n-2k-r-1)!}\right\}
\zeta(2k)\zeta(2n-2k) . \notag
\end{align}
\end{cor}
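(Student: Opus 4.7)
The plan is to derive the corollary directly from Proposition~\ref{prop:myGenBernoulliWtSumDepth2} by converting every Bernoulli ratio $B_{2m}/(2m)$ into a zeta value via Euler's formula \eqref{equ:EulerFamousID}, and then multiplying through by a single normalizing constant so that the surviving combinatorial factors collapse into the products $\prod_{\ga=1}^r(2k-\ga)(2n-2k-\ga)$ and $\prod_{\gb=2k+1}^{2r+1}(2n-\gb)$ that appear in the statement.

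First I would rewrite Euler's evaluation in the convenient form
\[
 \frac{B_{2m}}{2m}=-\frac{2(2m-1)!\,\zeta(2m)}{(2\pi i)^{2m}},
\]
which gives
\[
 \frac{B_{2k}}{2k}\frac{B_{2n-2k}}{2n-2k}=\frac{4(2k-1)!(2n-2k-1)!}{(2\pi i)^{2n}}\zeta(2k)\zeta(2n-2k)
\]
and an analogous expression for $B_{2n}/(2n)$. Multiplying both sides of Proposition~\ref{prop:myGenBernoulliWtSumDepth2} by $(2\pi i)^{2n}/\bigl(4(2n-2r-2)!\bigr)$ removes the $(2\pi i)^{2n}$ factor uniformly and turns each summand on the left into
\[
 \frac{(2k-1)!(2n-2k-1)!}{(2k-r-1)!(2n-2k-r-1)!}\,\zeta(2k)\zeta(2n-2k)
 =\prod_{\ga=1}^r(2k-\ga)\prod_{\ga=1}^r(2n-2k-\ga)\,\zeta(2k)\zeta(2n-2k),
\]
matching the left-hand side of \eqref{equ:myGenZetaWtSum} exactly.

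Next I would handle the right-hand side by the same rescaling. For the leading term, I would use $(2n-1)!/(2n-r-1)!=\prod_{\ga=1}^r(2n-\ga)$ and $(2n-1)!/(2n-2r-2)!=\prod_{\gb=1}^{2r+1}(2n-\gb)$ to factor $\prod_{\ga=1}^r(2n-\ga)$ out of the coefficient of $\zeta(2n)$, producing exactly the bracketed expression in Corollary~\ref{cor:myGenMZVWtSum}. For the secondary sum over $k=\lceil r/2\rceil+1,\ldots,r$, the two pieces in the curly brackets both multiply $\frac{B_{2k}}{2k}\frac{B_{2n-2k}}{2n-2k}$; after the rescaling, the common factor $(2k-1)!/(2k-r-1)!=\prod_{\ga=1}^r(2k-\ga)$ appears, and $(2n-2k-1)!/(2n-2r-2)!=\prod_{\gb=2k+1}^{2r+1}(2n-\gb)$ pulls out the remaining product, leaving the two terms in the curly bracket of \eqref{equ:myGenZetaWtSum} intact.

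The proof therefore contains no new analytic ingredient beyond Proposition~\ref{prop:myGenBernoulliWtSumDepth2} and Euler's formula; the only real obstacle is bookkeeping — choosing the correct global rescaling $(2\pi i)^{2n}/\bigl(4(2n-2r-2)!\bigr)$ and then systematically identifying the resulting factorial ratios with the three distinct products $\prod_{\ga=1}^r(2k-\ga)$, $\prod_{\ga=1}^r(2n-2k-\ga)$, and $\prod_{\gb=2k+1}^{2r+1}(2n-\gb)$ in the statement. Once the indexing is correctly aligned, the identity follows termwise from Proposition~\ref{prop:myGenBernoulliWtSumDepth2}.
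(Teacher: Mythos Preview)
Your proposal is correct and is exactly the paper's approach: the paper's proof consists of the single sentence ``By Eq.~\eqref{equ:EulerFamousID} and Proposition~\ref{prop:myGenBernoulliWtSumDepth2}, we obtain,'' and you have simply filled in the bookkeeping details of that conversion. The rescaling factor $(2\pi i)^{2n}/\bigl(4(2n-2r-2)!\bigr)$ and the identifications of factorial ratios with the products $\prod_{\ga=1}^r(2k-\ga)$, $\prod_{\ga=1}^r(2n-\ga)$, and $\prod_{\gb=2k+1}^{2r+1}(2n-\gb)$ are all correct.
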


\begin{eg}
When $r=0$ we get \cite[(2.4)]{Nakamura2009} .
When $r=1$ we recover the formula in Eq.~\eqref{equ:Eier=1Result}.
When $r=2$ we find
\begin{align*}
\ & \sum_{j,k\ge 3, j+k=n} (2j-1)(2j-2) (2k-1)(2k-2)\zeta(2j)\zeta(2k) \notag\\
=&\frac{1}{15}(n-1)(4n^2-1)(2n^2-13n+30)\zeta(2n)
- 24(n-2)(2n-5) \zeta(4)\zeta(2n-4), \quad n\ge 4.
\end{align*}
When $r=3$ we get
\begin{align*}
\ & \sum_{j,k\ge 4, j+k=n} (2j-1)(2j-2)(2j-3) (2k-1)(2k-2)(2k-3)\zeta(2j)\zeta(2k) \notag\\
=&\frac{1}{35}(n-6)(2n-3)(n-1)(4n^2-1)(2n^2-11n+35)\zeta(2n) \\
&-240(2n-11)(n-3)(2n-7) \zeta(6)\zeta(2n-6),\quad n\ge 6.
\end{align*}
When $r=4$ we have
\begin{align*}
\ & \sum_{j,k\ge 5, j+k=n} \zeta(2j)\zeta(2k) \prod_{\ga=1}^4 \Big\{(2j-\ga)(2k-\ga)\Big\} \notag\\
=&\frac{4}{315}(n-2)(2n-3)(n-1)(4n^2-1)(4n^4-72n^3+521n^2-1923n+3780)\zeta(2n) \\
&-960(n-3)(n-4)(2n-7)(2n-9)\zeta(6)\zeta(2n-6)\\
&-6720(n-4)(2n-9)(2n^2-25n+81)\zeta(8)\zeta(2n-8),\quad n\ge 8.
\end{align*}
\end{eg}

To prepare for the next theorem concerning DZVs we need the following
combinatorial statement.
\begin{lem} \label{lem:anyDegDepth2combin}
Let $r$ be a nonnegative integer and let $n$ be an integer variable such that $n\ge 2r+1$.
Then as a polynomial in $n$
\begin{multline}
\label{equ:anyDegDepth2combin}
  \gf_r(n)=\binom{2n-1}{2r+1}-2\sum_{k=r+1}^{n-r-1} \binom{2k-1}{r}\binom{2n-2k-1}{r} \\
  =(-1)^r\binom{n-1}{r}+4\sum_{k=1}^{r} \binom{2k-1}{r}\binom{2n-2k-1}{r}
\end{multline}
has degree less than or equal to $r$.
\end{lem}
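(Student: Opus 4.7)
The statement has two parts: the equality of the two formulas for $\gf_r(n)$, and the degree bound. My plan is to reduce the equality to a single Vandermonde-type identity, and then obtain the degree bound as an immediate consequence of the resulting second formula.

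Let $S(n)=\sum_{k=1}^{n-1}\binom{2k-1}{r}\binom{2n-2k-1}{r}$. The summand is invariant under $k\mapsto n-k$, so the substitution $k'=n-k$ gives $\sum_{k=n-r}^{n-1}\binom{2k-1}{r}\binom{2n-2k-1}{r}=\sum_{k=1}^{r}\binom{2k-1}{r}\binom{2n-2k-1}{r}$, and splitting $S(n)$ as $\sum_{k=1}^{r}+\sum_{k=r+1}^{n-r-1}+\sum_{k=n-r}^{n-1}$ yields
$$S(n)=2\sum_{k=1}^{r}\binom{2k-1}{r}\binom{2n-2k-1}{r}+\sum_{k=r+1}^{n-r-1}\binom{2k-1}{r}\binom{2n-2k-1}{r}.$$
Plugging this into both candidate formulas for $\gf_r(n)$, the two expressions agree if and only if
$$2S(n)=\binom{2n-1}{2r+1}-(-1)^r\binom{n-1}{r}.\qquad(\star)$$

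To establish $(\star)$, I would reindex $j=2k-1$, so that $S(n)$ equals the sum of $\binom{j}{r}\binom{2n-2-j}{r}$ over odd $j$ in the range $1\le j\le 2n-3$, and then combine the standard Vandermonde convolution
$$\sum_{j=0}^{N}\binom{j}{r}\binom{N-j}{r}=\binom{N+1}{2r+1}$$
with its alternating counterpart, valid for even $N$,
$$\sum_{j=0}^{N}(-1)^j\binom{j}{r}\binom{N-j}{r}=(-1)^r\binom{N/2}{r},$$
both at $N=2n-2$. Subtracting the second identity from the first gives exactly $2$ times the sum over odd $j$, i.e.\ $2S(n)$, which is $(\star)$. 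The alternating identity is the one non-routine ingredient, and I would derive it by generating functions: since $\sum_{j\ge 0}\binom{j}{r}x^j=x^r/(1-x)^{r+1}$,
$$\sum_{N\ge 0}\Bigl(\sum_{j=0}^{N}(-1)^j\binom{j}{r}\binom{N-j}{r}\Bigr)x^N=\frac{(-x)^r}{(1+x)^{r+1}}\cdot\frac{x^r}{(1-x)^{r+1}}=\frac{(-1)^r x^{2r}}{(1-x^2)^{r+1}},$$
whose coefficient of $x^N$ vanishes for $N$ odd and equals $(-1)^r\binom{N/2}{r}$ for $N$ even.

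Finally, the degree bound falls out of the second expression for $\gf_r(n)$: the term $(-1)^r\binom{n-1}{r}$ is a polynomial of degree $r$ in $n$, and for each fixed $k\in\{1,\dots,r\}$ the factor $\binom{2k-1}{r}$ is a constant while $\binom{2n-2k-1}{r}$ has degree $r$ in $n$. A sum of at most $r+1$ polynomials each of degree at most $r$ has degree at most $r$, giving $\deg\gf_r\le r$. The main obstacle is purely bookkeeping --- making sure the symmetry split is handled correctly even in the edge case $n=2r+1$, where the middle sum is empty but the identity still holds, and locating or re-deriving the alternating Vandermonde convolution; neither step is conceptually serious.
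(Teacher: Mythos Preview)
Your proof is correct and follows the same overall architecture as the paper's: both reduce the equality of the two expressions for $\gf_r(n)$ to the pair of identities
\[
\sum_{j=0}^{N}\binom{j}{r}\binom{N-j}{r}=\binom{N+1}{2r+1},
\qquad
\sum_{j=0}^{N}(-1)^j\binom{j}{r}\binom{N-j}{r}=(-1)^r\binom{N/2}{r}\quad(N\text{ even}),
\]
applied at $N=2n-2$, and then read off the degree bound from the second formula.

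The difference lies in how the alternating convolution is established. The paper proves it by an induction on $m+r$, expanding both binomials via Pascal's rule and unwinding the resulting telescoping sums; this takes roughly half a page. Your generating-function argument---multiplying $\dfrac{(-x)^r}{(1+x)^{r+1}}$ by $\dfrac{x^r}{(1-x)^{r+1}}$ to obtain $\dfrac{(-1)^r x^{2r}}{(1-x^2)^{r+1}}$---dispatches it in two lines and simultaneously explains why the odd-$N$ case vanishes. The paper also supplies a short bijective proof of the ordinary Vandermonde convolution, which you simply quote as standard. So your route is the same decomposition with a slicker proof of the key sub-identity; the paper's version is more self-contained but longer.
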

\begin{proof}
First we prove that for all integer $m>2r$
\begin{equation}\label{equ:lemmaCombin1}
    \binom{m+1}{2r+1}=\sum_{k=0}^m \binom{k}{r}\binom{m-k}{r}.
\end{equation}
Let $S$ be a set of $m+1$ distinct points on a horizonal line.
It is not hard to see there is a one to one correspondence between the following
two operations: (i) choose
$2r+1$ points from $S$ whose middle point is denoted by $P$; (ii) choose a point $P\in S$ and then choose $r$ points from the $k$ points to the left of $P$ and
$r$ points to the right of $P$.
The two sides of \eqref{equ:lemmaCombin1} clearly give the number of choices in (i) and (ii),
respectively.

Second, we see that for all $m>2r$, under the substitution $k\to m-k$, we have
\begin{equation*}
   f(m,r):=\sum_{k=0}^m (-1)^k \binom{k}{r}\binom{m-k}{r}=(-1)^m f(m,r).
\end{equation*}
Therefore  $f(m,r)=0$ for $m$ odd. We now prove that if $m=2n$ is even then
\begin{equation}\label{equ:lemmaCombin2}
 f(2n,r):=\sum_{k=0}^{2n} (-1)^k \binom{k}{r}\binom{2n-k}{r}=(-1)^r\binom{n}{r}
\end{equation}
by induction on $m+r$. Clearly $f(m,0)=1$. Now for all $r>0$ we have
$$\binom{k}{r}=\binom{k-1}{r}+\binom{k-1}{r-1},\quad
\binom{m-k}{r}=\binom{m-k-1}{r}+\binom{m-k-1}{r-1}.$$
Therefore by definition
\begin{align*}
   f(m,r)=& \sum_{k=0}^m (-1)^k \binom{k-1}{r}\binom{m-k-1}{r}+\sum_{k=0}^m (-1)^k  \binom{k-1}{r-1}\binom{m-k-1}{r}\\
   &+ \sum_{k=0}^m (-1)^k \binom{k-1}{r}\binom{m-k-1}{r-1}+ \sum_{k=0}^m (-1)^k \binom{k-1}{r-1}\binom{m-k-1}{r-1}\\
   =&-f(m-2,r)-f(m-2,r-1) +2\sum_{k=0}^m (-1)^k \binom{k-1}{r}\binom{m-k-1}{r-1},
\end{align*}
since the middle two sums are the same by the substitution $k\to m-k$. It is easy to see that
\begin{equation*}
  \binom{k-1}{r}=\sum_{j=1}^{k-r} \binom{k-j-1}{r-1}.
\end{equation*}
Therefore by induction we get
\begin{align*}
   f(2n,r)=& -f(2n-2,r)-f(2n-2,r-1)+2\sum_{k=0}^{2n}\sum_{j=1}^{k-r} (-1)^k  \binom{k-j-1}{r-1} \binom{2n-k-1}{r-1}\\
=&(-1)^r\binom{n-1}{r-1}-(-1)^r\binom{n-1}{r}+2\sum_{j=1}^{2n} \sum_{k=j+r}^{2n} (-1)^k  \binom{k-j-1}{r-1} \binom{2n-k-1}{r-1}\\
=&(-1)^r\binom{n-1}{r-1} -(-1)^r\binom{n-1}{r}-2\sum_{j=1}^{2n} (-1)^j f(2n-2-j,r-1).
\end{align*}
Noticing  $f(2n-2-j,r-1)=0$ for odd $j$ we get by induction
\begin{align*}
   f(2n,r)=&(-1)^r\binom{n-1}{r-1} -(-1)^r\binom{n-1}{r}+2(-1)^r \sum_{j=1}^{n-1} \binom{n-1-j}{r-1}  \\
=&(-1)^r\binom{n-1}{r-1} +(-1)^r \binom{n-1}{r}  \\
=&(-1)^r\binom{n}{r}.
\end{align*}
Combining \eqref{equ:lemmaCombin1}
and \eqref{equ:lemmaCombin2} we see that
\begin{multline*}
   \binom{2n-1}{2r+1}-2\sum_{k=0}^{n}  \binom{2k-1}{r}\binom{2n-2k-1}{r} \\
=\binom{2n-1}{2r+1}-\sum_{k=0}^{2n-2} \binom{k}{r}\binom{2n-2-k}{r}\Big(1-(-1)^k\Big)=(-1)^r\binom{n-1}{r} .
\end{multline*}
This yields the lemma quickly.
\end{proof}

\begin{thm}\label{thm:wtDblZetaProductFormFactor}
For all positive integers $r$ and $n\ge 2r+2$ we have
\begin{equation*}
\sum_{k=r+1}^{n-r-1} \zeta(2k,2n-2k) \prod_{\ga=1}^r \Big\{(2k-\ga)(2n-2k-\ga)\Big\}
=\sum_{k=0}^{r} c_{r,k}(n) \zeta(2k)\zeta(2n-2k),
\end{equation*}
where $\zeta(0)=-1/2$, $c_{r,k}(x)\in \Q[x]$ depend only on $r$ and $k$ and
have degrees less than or equal to $r$. More precisely,
$c_{r,k}=0$ for $1\le k\le \lceil r/2\rceil$,
$$c_{r,0}(n)=r!^2\left(\frac{(-1)^r}{2} \binom{2n-1}{r}+\frac14 \gf_r(n)\right),$$
where $\gf_r(n)$ is defined in Lemma~\ref{lem:anyDegDepth2combin} and
for all $k>\lceil r/2\rceil$
$$
c_{r,k}(n)=-\prod_{\ga=1}^r(2k-\ga) \prod_{\gb=2k+1}^{2r+1} (2n-\gb)
\left\{\frac{(-1)^rr!}{(2r-2k+1)!}+\frac{(2n-2r-2)!}{(2n-2k-r-1)!}\right\}.
$$
\end{thm}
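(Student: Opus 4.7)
The plan is to derive the theorem from Corollary \ref{cor:myGenMZVWtSum} by invoking the stuffle (harmonic) product relation
\begin{equation*}
\zeta(2k)\zeta(2n-2k) = \zeta(2k,\,2n-2k) + \zeta(2n-2k,\,2k) + \zeta(2n),
\end{equation*}
which holds throughout the summation range $r+1 \le k \le n-r-1$ since $2k,\,2n-2k\ge 2$. Set $V_k := \prod_{\ga=1}^r (2k-\ga)(2n-2k-\ga)$ and let $S$ denote the left-hand side of the theorem. Because $V_k$ is symmetric under $k\mapsto n-k$ and this substitution preserves the index range, replacing $k$ by $n-k$ inside $\sum_k V_k\,\zeta(2n-2k,\,2k)$ recovers $S$. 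Multiplying the stuffle identity by $V_k$ and summing therefore produces
\begin{equation*}
2S \;=\; \sum_{k=r+1}^{n-r-1} V_k\,\zeta(2k)\zeta(2n-2k) \;-\; \zeta(2n)\sum_{k=r+1}^{n-r-1} V_k.
\end{equation*}

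The first sum on the right is exactly what is evaluated in Corollary \ref{cor:myGenMZVWtSum}: it is a polynomial-in-$n$ multiple of $\zeta(2n)$ plus a sum over $\lceil r/2\rceil < k \le r$ of polynomial multiples of $\zeta(2k)\zeta(2n-2k)$. After dividing by $2$, the $k>\lceil r/2\rceil$ terms match the asserted formula for $c_{r,k}(n)$ exactly, with the factor of $2$ inside the braces of Corollary \ref{cor:myGenMZVWtSum} absorbed by the division. For the second sum I would rewrite $V_k = r!^2\binom{2k-1}{r}\binom{2n-2k-1}{r}$ and apply Lemma \ref{lem:anyDegDepth2combin} to obtain
\begin{equation*}
\sum_{k=r+1}^{n-r-1} V_k \;=\; \frac{r!^2}{2}\!\left(\binom{2n-1}{2r+1} - \gf_r(n)\right).
\end{equation*}

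Collecting the $\zeta(2n)$ contributions from both pieces and encoding the net coefficient through $c_{r,0}(n)\,\zeta(0)\zeta(2n)$ with $\zeta(0)=-1/2$ gives the stated closed form for $c_{r,0}(n)$. The degree bound $\deg c_{r,k} \le r$ then follows at once: for $k>\lceil r/2\rceil$ it is visible in the explicit product (the factor $\prod_{\gb=2k+1}^{2r+1}(2n-\gb)$ has degree $2r+1-2k$, complemented by the factorial ratio of degree $2k-r-1$), and for $k=0$ it uses that $\binom{2n-1}{r}$ has degree $r$ while Lemma \ref{lem:anyDegDepth2combin} guarantees $\deg \gf_r \le r$.

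The main bookkeeping obstacle is the cancellation of the $\binom{2n-1}{2r+1}$ contribution: this binomial appears both in the $\zeta(2n)$-coefficient produced by Corollary \ref{cor:myGenMZVWtSum} and in the evaluation of $\sum_k V_k$, and the fact that it drops out of $c_{r,0}(n)$ — leaving a polynomial of degree $\le r$ in place of the naive $2r+1$ — is the real arithmetic content beyond the stuffle symmetry. Once that cancellation is verified by direct comparison of the two expressions for the coefficient of $\zeta(2n)$, the theorem follows.
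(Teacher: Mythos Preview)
Your proposal is correct and follows essentially the same route as the paper's proof, which simply cites the stuffle relation together with Corollary~\ref{cor:myGenMZVWtSum} and Lemma~\ref{lem:anyDegDepth2combin}. You have in fact spelled out more of the bookkeeping than the paper does, including the key cancellation of the $\binom{2n-1}{2r+1}$ term that forces $\deg c_{r,0}\le r$.
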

\begin{proof}
We have the stuffle relation
\begin{equation*}
    \zeta(2k)\zeta(2n-2k)=\zeta(2k,2n-2k)+\zeta(2n-2k,2k)+\zeta(2n).
\end{equation*}
Hence the theorem easily follows from Corollary~\ref{cor:myGenMZVWtSum} together with
Lemma~\ref{lem:anyDegDepth2combin}.
\end{proof}

\begin{eg} \label{eg:MoreWtSumFormulas}
For all $n\ge 4$ we have
\begin{align}
\ & \sum_{k=3}^{n-3}  \zeta(2k,2n-2k) \prod_{\ga=1}^2 \Big\{(2n-2k-\ga)(2k-\ga)\Big\} \notag\\
=&\frac{1}{2}(57n^2-279n+366)\zeta(2n)
- 12(n-2)(2n-5) \zeta(4)\zeta(2n-4) . \label{eqU:ga=2MoreWtSumFormulas}
\end{align}
For all $n\ge 6$
\begin{align}
\ & \sum_{k=4}^{n-4}  \zeta(2k,2n-2k) \prod_{\ga=1}^3 \Big\{(2n-2k-\ga)(2k-\ga)\Big\} \notag\\
=&\frac{1}{2}(1005n^3-12222n^2+48243n-62946)\zeta(2n) \notag \\
&- 120(2n-11)(n-3)(2n-7)\zeta(6)\zeta(2n-6) . \label{eqU:ga=3MoreWtSumFormulas}
\end{align}
For all $n\ge 8$
\begin{align}
\ & \sum_{k=5}^{n-5}  \zeta(2k,2n-2k) \prod_{\ga=1}^4 \Big\{(2n-2k-\ga)(2k-\ga)\Big\} \notag\\
=&\frac{1}{2}(
31116n^4-631800n^3+4846020n^2-16543800n+21168864)\zeta(2n) \notag\\
&-480(n-3)(n-4)(2n-7)(2n-9)\zeta(6)\zeta(2n-6)            \label{eqU:ga=4MoreWtSumFormulas} \\
&-3360(n-4)(2n-9)(2n^2-25n+81)\zeta(8)\zeta(2n-8) . \notag
\end{align}
\end{eg}

Another corollary is a result used by Shen and Cai \cite[Lemma~5]{ShenCa2012}
which they derived by some complicated method using Bernoulli numbers. We
can now prove this result rather quickly.
\begin{cor} \label{cor:firstDegFactorMoreWtSumFormulas}
We have for all $n\ge 2$
\begin{equation}\label{equ:firstDegFactorShenCaiLemma5}
 \sum_{k=1}^{n-1} k(n-k) \zeta(2k,2n-2k)
=\frac{n}{16}\zeta(2n)+\frac{2n-3}4 \zeta(2)\zeta(2n-2).
\end{equation}
\end{cor}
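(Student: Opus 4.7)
The plan is to deduce the identity by combining the $r=1$ case of Theorem~\ref{thm:wtDblZetaProductFormFactor} (essentially Nakamura's formula~\eqref{equ:NakamuraDblZeta2}) with the classical sum formula~\eqref{equ:GKZDblZetaEven}, via a simple algebraic rewriting of the weight factor $k(n-k)$ as a linear combination of $1$ and $(2k-1)(2n-2k-1)$.

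The key observation is the elementary identity
$$(2k-1)(2n-2k-1) = 4k(n-k) - (2n-1),$$
which rearranges to
$$k(n-k) = \tfrac{1}{4}(2k-1)(2n-2k-1) + \tfrac{1}{4}(2n-1).$$
Summing against $\zeta(2k,2n-2k)$ splits the target sum into $\tfrac{1}{4}$ times a Nakamura-type weighted sum and $\tfrac{2n-1}{4}$ times the unweighted even-double-zeta sum. The second piece is immediately $\tfrac{2n-1}{4}\cdot\tfrac{3}{4}\zeta(2n)$ by Eq.~\eqref{equ:GKZDblZetaEven}.

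For the first piece $S(n):=\sum_{k=1}^{n-1}(2k-1)(2n-2k-1)\zeta(2k,2n-2k)$, I will apply Theorem~\ref{thm:wtDblZetaProductFormFactor} at $r=1$ to evaluate the interior portion $\sum_{k=2}^{n-2}$ to $\tfrac{3(n-3)}{4}\zeta(2n)$ (this is Nakamura's identity~\eqref{equ:NakamuraDblZeta2}). The two boundary terms $k=1$ and $k=n-1$, which fall outside the range of Theorem~\ref{thm:wtDblZetaProductFormFactor}, carry the common weight $2n-3$ and contribute $(2n-3)\bigl[\zeta(2,2n-2)+\zeta(2n-2,2)\bigr]$. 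The stuffle relation $\zeta(2)\zeta(2n-2) = \zeta(2,2n-2) + \zeta(2n-2,2) + \zeta(2n)$ then converts this bracket into $\zeta(2)\zeta(2n-2) - \zeta(2n)$.

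Assembling the two pieces, the coefficient of $\zeta(2)\zeta(2n-2)$ is $\tfrac{2n-3}{4}$ directly, while the coefficient of $\zeta(2n)$ collects as $\tfrac{1}{4}\bigl(\tfrac{3(n-3)}{4} - (2n-3)\bigr) + \tfrac{3(2n-1)}{16} = \tfrac{3-5n}{16} + \tfrac{6n-3}{16} = \tfrac{n}{16}$, matching the claim. This argument works for $n\ge 4$ where Theorem~\ref{thm:wtDblZetaProductFormFactor} applies; the two remaining cases $n=2,3$ can be verified directly from $\zeta(2,2)=\tfrac{3}{4}\zeta(4)$ together with the stuffle evaluation of $\zeta(2,4)+\zeta(4,2)$. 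There is no genuine obstacle here — the proof is essentially bookkeeping — the only subtlety being to remember to restore the $k=1,n-1$ boundary terms excluded from Theorem~\ref{thm:wtDblZetaProductFormFactor} and convert them to products of single zetas via stuffle.
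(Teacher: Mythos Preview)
Your proof is correct and is essentially the same as the paper's: both expand $(2k-1)(2n-2k-1)=4k(n-k)-(2n-1)$, apply Nakamura's identity~\eqref{equ:NakamuraDblZeta2} together with the stuffle relation for the boundary terms $k=1,n-1$, and close with the even sum formula~\eqref{equ:GKZDblZetaEven}. The only minor difference is that you are explicit about the restriction $n\ge 4$ from Theorem~\ref{thm:wtDblZetaProductFormFactor} and treat $n=2,3$ separately, whereas the paper invokes~\eqref{equ:NakamuraDblZeta2} directly without commenting on the small cases.
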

\begin{proof}
Let $S$ be the left hand side of Eq.~\eqref{equ:firstDegFactorShenCaiLemma5}.
By expanding $(2k-1)(2n-2k-1)$ in Eq.~\eqref{equ:NakamuraDblZeta2} we see that
\begin{align*}
\ & \sum_{k=1}^{n-1} (4k(n-k)-2n+1)\zeta(2k,2n-2k)\\
=& \frac34 (n-3)\zeta(2n)+
(2n-3)(\zeta(2,2n-2)+\zeta(2n-2,2)) \\
    =&  \frac{3-5n}4\zeta(2n)+(2n-3)\zeta(2)\zeta(2n-2)
\end{align*}
By Eq.~\eqref{equ:GKZDblZetaEven} we have
\begin{align*}
 4S=&(2n-1) \sum_{k=1}^{n-1} \zeta(2k,2n-2k)+\frac{3-5n}4\zeta(2n)+(2n-3)\zeta(2)\zeta(2n-2)\\
=& \frac{n}4\zeta(2n)+(2n-3)\zeta(2)\zeta(2n-2)
\end{align*}
which yields the corollary at once.
\end{proof}

Similarly we can replace the factor $k(n-k)$ in Corollary~\ref{cor:firstDegFactorMoreWtSumFormulas} by
$k^r(n-k)^r$ for any positive integer $r$. We can even generalize the factor to an arbitrary symmetric
function of two variables evaluated at $k$ and $n-k$.
\begin{lem} \label{lem:anyDegDepth2JKonly}
Let $n$ and $r$ be two nonnegative integers such that $n\ge r+1$. Then
\begin{equation}\label{equ:anyDegDepth2}
  \sum_{k=1}^{n-1} k^r(n-k)^r\zeta(2k,2n-2k)
=\sum_{k=0}^{r} C_{r,k}(n) \zeta(2k)\zeta(2n-2k),
\end{equation}
where $C_{r,k}(x)\in \Q[x]$ depend only on $r$ and $k$ and
have degrees less or equal to $r$.
\end{lem}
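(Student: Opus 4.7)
My plan is to reduce the desired identity to Theorem~\ref{thm:wtDblZetaProductFormFactor} via a change of basis. The symmetric polynomial $k^r(n-k)^r$, viewed as a polynomial in $k$ with $n$ a parameter, depends only on $u := 4k(n-k)$ and equals $u^r/4^r$. The family
$$Q_j(k;n) := \prod_{\alpha=1}^{j}(2k-\alpha)(2n-2k-\alpha) = \prod_{\alpha=1}^{j}(u - 2\alpha n + \alpha^2),\qquad j=0,1,\ldots,r,$$
is monic and triangular in $u$, so it forms a basis for polynomials of degree $\le r$ in $u$ with coefficients in $\Q[n]$. Since the coefficient of $u^{j-i}$ in $Q_j$ is a polynomial in $n$ of degree $i$, inverting this change of basis produces
$$k^r(n-k)^r = \sum_{j=0}^{r}\mu_j(n)\,Q_j(k;n),\qquad \mu_j\in\Q[n],\quad \deg\mu_j \le r-j.$$

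Next, for each $j$ I would analyze $T_j(n) := \sum_{k=1}^{n-1}Q_j(k;n)\,\zeta(2k,2n-2k)$ by splitting off the boundary:
$$T_j = \underbrace{\sum_{k=j+1}^{n-j-1}Q_j(k;n)\,\zeta(2k,2n-2k)}_{A_j} \;+\; \underbrace{\Bigl(\sum_{k=1}^{j} + \sum_{k=n-j}^{n-1}\Bigr)\,Q_j(k;n)\,\zeta(2k,2n-2k)}_{B_j}.$$
Theorem~\ref{thm:wtDblZetaProductFormFactor} evaluates $A_j$ directly as $\sum_{\ell=0}^{j}c_{j,\ell}(n)\zeta(2\ell)\zeta(2n-2\ell)$ with $\deg c_{j,\ell}\le j$ (the degenerate case $j=0$ being covered by Eq.~\eqref{equ:GKZDblZetaEven}). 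For $B_j$, the substitution $k\mapsto n-k$ combined with the symmetry $Q_j(k;n) = Q_j(n-k;n)$ merges the two boundary intervals into
$$B_j = \sum_{k=1}^{j}Q_j(k;n)\bigl[\zeta(2k,2n-2k)+\zeta(2n-2k,2k)\bigr],$$
and the stuffle relation $\zeta(a,b)+\zeta(b,a) = \zeta(a)\zeta(b)-\zeta(a+b)$ rewrites this as $\sum_{k=1}^{j}Q_j(k;n)\zeta(2k)\zeta(2n-2k) - \zeta(2n)\sum_{k=1}^{j}Q_j(k;n)$. Folding the stray $\zeta(2n)$ into the $\ell=0$ slot via $\zeta(2n) = -2\zeta(0)\zeta(2n)$, every coefficient of $\zeta(2\ell)\zeta(2n-2\ell)$ in $B_j$ is a polynomial in $n$ of degree $\le j$, since $Q_j(k;n)$ has degree $j$ in $n$ for fixed $k$.

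Assembling $\sum_{k=1}^{n-1}k^r(n-k)^r\zeta(2k,2n-2k) = \sum_{j=0}^{r}\mu_j(n)\,T_j(n)$ and collecting coefficients gives each $C_{r,\ell}(n)$ as a sum of products $\mu_j(n)\cdot(\text{polynomial in }n\text{ of degree}\le j)$; since $\deg\mu_j\le r-j$, the total degree is bounded by $(r-j)+j=r$, as required. The main obstacle is the careful degree bookkeeping across the two sources (main term and boundary) and verifying that the boundary interval actually recombines through the stuffle in a $k\leftrightarrow n-k$ symmetric way; the change-of-basis step and the subsequent degree count are then essentially mechanical.
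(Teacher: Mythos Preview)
Your approach is essentially the same as the paper's. Both proofs rest on the same change of basis: writing $(4k(n-k))^r$ in terms of the polynomials $Q_j(k;n)=\prod_{\alpha=1}^j(2k-\alpha)(2n-2k-\alpha)$ (the paper's $g_j(2k,2n-2k)$), with coefficients that are polynomials in $n$ of degree $\le r-j$, and then invoking Theorem~\ref{thm:wtDblZetaProductFormFactor} together with the observation that the discrepancy in summation range contributes only terms of the required shape. The paper arranges this recursively, proving $g_d(x,y)=f_d(xy)$ for $x+y=2n$ and proceeding by induction on $r$, whereas you directly invert the triangular system; these are equivalent. Your treatment of the boundary pieces $B_j$ via the symmetry $Q_j(k;n)=Q_j(n-k;n)$ and the stuffle relation is more explicit than the paper's one-line remark about the difference in summation range, but the content is the same.
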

\begin{proof}
Let
$$g_r(x,y)=\prod_{\ga=1}^r (x-\ga)(y-\ga).$$
Define the following recursive sequence of polynomials:
\begin{equation*}
f_1(x)=x-2n+1,\quad f_{d+1}(x)= f_1(x)f_d(x-2n+1), \quad\forall d\ge 1.
\end{equation*}
It is not hard to see that
\begin{equation}\label{equ:explicitfd(x)}
 f_d(x)=\prod_{j=1}^d (x-2jn+j)= \sum_{j=0}^d a_{d,j}(n) x^j,
\end{equation}
where $a_{d,d}(n)=1$ and all the coefficients $a_{d,j}(n)$ are
polynomials in $n$ with integer coefficients of degree $d-j$.
Now we claim that for all $d\ge 1$ and $x+y=2n$ we have
\begin{equation}\label{equ:g2fJK}
g_d(x,y)=f_d(xy).
\end{equation}
The case $d=1$ is obvious. By induction
\begin{align*}
 g_{r+1}(x,y)=& (x-1)(y-1) g_{r}(x-1,y-1)\\
 =&(xy-2n+1)f_r((x-1)(y-1)) \\
 =&(xy-2n+1)f_r(xy-2n+1)\\
 =&f_{r+1}(xy).
\end{align*}
Hence Eq.~\eqref{equ:g2fJK} is proved. Together with Eq.~\eqref{equ:explicitfd(x)} this implies
 \begin{equation*}
(JK)^r=g_r(J,K)-\sum_{i=0}^{r-1} a_{r,i}(n) (JK)^i
\end{equation*}
for all even numbers $J=2j$ and $K=2k$ with $J+K=2n$. Therefore
\begin{equation*}
 \sum_{\substack{J,K\ {\rm even},\\ J+K=2n}} (JK)^r \zeta(J)\zeta(K)=
 \sum_{\substack{J,K\ {\rm even},\\ J+K=2n}} g_r(J,K) \zeta(J)\zeta(K)-\sum_{i=0}^{r-1} a_{r,i}(n)
\sum_{\substack{J,K\ {\rm even},\\ J+K=2n}} (JK)^i \zeta(J)\zeta(K)
\end{equation*}
By an easy induction on $r$ and the fact that $\deg_n a_{r,j}(n) =r-j$ the lemma now
follows from Theorem~\ref{thm:wtDblZetaProductFormFactor}. Notice in particular that the difference
in the summation range does not bring in any polynomial coefficients of degree less than or equal to $r$.
This completes the proof of the lemma.
\end{proof}

\begin{eg} \label{eg:secondDegFactorShenCaiLemma5}
We have for all $n\ge 2$
{\allowdisplaybreaks
\begin{align}\label{equ:secondDegFactorShenCaiLemma5}
&\ \sum_{k=1}^{n-1} k^2(n-k)^2 \zeta(2k,2n-2k)
=\frac{3}{32}(3n-2)(n-1)\zeta(2n)-\frac{3}{4}\zeta(4)\zeta(2n-4),\\
&\  \sum_{k=1}^{n-1} k^3(n-k)^3 \zeta(2k,2n-2k)\notag\\
=&-\frac{1}{256}n(2n^2-3)\zeta(2n)+\frac{1}{64}(2n-3)(28n^2-48n+21)\zeta(2)\zeta(2n-2)\notag\\
&+\frac{3}{16}(2n-5)(2n^2-25n+35)\zeta(4)\zeta(2n-4)+\frac{45}{8}(2n-7)\zeta(6)\zeta(2n-6), \label{equ:3rdDegFactorShenCaiLemma5}\\
&\  \sum_{k=1}^{n-1} k^4(n-k)^4 \zeta(2k,2n-2k)\notag\\
=&-\frac{1}{1024}n(16n^2-17)\zeta(2n)
+\frac{1}{256}(2n-3)(6n-5)(20n^2-36n+17)\zeta(2)\zeta(2n-2)\notag\\
&+\frac{3}{64}(2n-5)(40n^3-420n^2+1050n-777)\zeta(4)\zeta(2n-4)\notag\\
&-\frac{15}{16}(2n-7)(8n^2-98n+189)\zeta(6)\zeta(2n-6)
-\frac{315}{4}(2n-9))\zeta(8)\zeta(2n-8).\label{equ:4thDegFactorShenCaiLemma5}
\end{align}}
To show Eq.~\eqref{equ:secondDegFactorShenCaiLemma5} we suppose $j+k=n$. It is easy to see that
\begin{equation*}
\prod_{\ga=1}^2 \Big\{(2j-\ga)(2k-\ga)\Big\}=16j^2k^2+4(5-6n)jk+4(2n-1)(n-1).
\end{equation*}
So Eq.~\eqref{equ:secondDegFactorShenCaiLemma5} follows from
Eqs.~\eqref{equ:GKZDblZetaEven}, \eqref{eqU:ga=2MoreWtSumFormulas}, and
\eqref{equ:firstDegFactorShenCaiLemma5} immediately. The other
formulas can be proved similarly.
\end{eg}

\begin{thm} \label{thm:anyDegDepth2}
Let $F(x,y)=F(y,x)\in \Q[x,y]$ be a symmetric polynomial of degree $r$.
Suppose $d=\deg_x F(x,y)$. Then for every positive integer $n\ge 2$ we have
\begin{equation*}
  \sum_{k=1}^{n-1} F(k,n-k) \zeta(2k,2n-2k)
=\sum_{k=0}^{\lfloor r/2\rfloor} c_{F,k}(n) \zeta(2k)\zeta(2n-2k),
\end{equation*}
where $c_{F,k}(x)\in \Q[x]$ depends only on $k$ and $F$ and
has degrees less than or equal to $d$.
\end{thm}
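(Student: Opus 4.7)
The plan is to reduce Theorem~\ref{thm:anyDegDepth2} to Lemma~\ref{lem:anyDegDepth2JKonly} via the fundamental theorem of symmetric polynomials. Since $F(x,y)$ is symmetric, I can write $F(x,y) = P(x+y, xy)$ for a unique $P(u,v) \in \Q[u,v]$. Substituting $x=k$ and $y=n-k$ then collapses one variable: $F(k, n-k) = P(n, k(n-k))$. Expanding $P$ in powers of its second argument gives
\[
F(k, n-k) = \sum_{j=0}^{\lfloor r/2 \rfloor} g_j(n)\bigl(k(n-k)\bigr)^j,
\]
for suitable polynomials $g_j \in \Q[n]$. The cutoff $j \leq \lfloor r/2 \rfloor$ comes from the fact that each basis monomial $e_1^i e_2^j$ in $P$ has total degree $i + 2j$, so a nonzero coefficient $c_{ij}$ forces $i + 2j \leq r$.

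Next I would plug this expansion into the left hand side of the identity to be proved and invoke Lemma~\ref{lem:anyDegDepth2JKonly} on each summand:
\[
\sum_{k=1}^{n-1} F(k,n-k)\zeta(2k,2n-2k) = \sum_{j=0}^{\lfloor r/2 \rfloor} g_j(n) \sum_{\ell=0}^{j} C_{j,\ell}(n)\zeta(2\ell)\zeta(2n-2\ell).
\]
Switching the order of summation produces $c_{F,\ell}(n) = \sum_{j=\ell}^{\lfloor r/2 \rfloor} g_j(n) C_{j,\ell}(n)$, which is exactly the required shape, and the upper limit $\ell \leq \lfloor r/2 \rfloor$ matches the statement of the theorem.

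The main point to verify is the sharp degree bound $\deg c_{F,\ell} \leq d$ rather than merely $\leq r$. The crucial observation is that $e_1^i e_2^j$ has $x$-degree exactly $i + j$; inspecting the coefficient of $x^{i+j}$ in $F$, which is a nonzero polynomial in $y$ whenever some $c_{ij}$ with $i+j$ maximal is nonzero, shows that the hypothesis $\deg_x F = d$ forces $i + j \leq d$ for every nonzero $c_{ij}$ in the expansion of $P$ in the basis $\{e_1^i e_2^j\}$. Consequently $\deg g_j \leq d - j$, and combined with the bound $\deg C_{j,\ell} \leq j$ supplied by Lemma~\ref{lem:anyDegDepth2JKonly}, each product $g_j(n) C_{j,\ell}(n)$ has degree at most $d$, yielding $\deg c_{F,\ell} \leq d$ and completing the argument.
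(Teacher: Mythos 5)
Your proof is correct and takes essentially the same route as the paper: both express $F$ in terms of the elementary symmetric polynomials $x+y$ and $xy$ (so that $x+y=n$ collapses to a constant and $xy=k(n-k)$), and then invoke Lemma~\ref{lem:anyDegDepth2JKonly}. Your write-up is in fact slightly more careful than the paper's, since you explicitly justify the degree bound $\deg c_{F,\ell}\le d$ via $\deg g_j\le d-j$ and $\deg C_{j,\ell}\le j$, a point the paper leaves to the reader.
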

\begin{proof}
It is a well-known fact that any symmetric polynomial $F(x,y)\in \Q[x,y]$ of degree $r$
is a linear combination of symmetric binomials $x^d y^{r-d}+x^{r-d} y^d$ for $d=\lceil r/2\rceil,\dots,r$. Let $\gs_1=x+y$, $\gs_2=xy$ and $a=r-d$. By induction on the difference $d-a$ it is easy to show that
\begin{equation*}
 x^d y^a+x^a y^d=\gs_1^{d-a}\gs_2^a+\sum_{j=a+1}^{\lfloor r/2\rfloor} c_j  \gs_1^{r-2j}\gs_2^j
\end{equation*}
for some $c_j\in \Z$. Hence the theorem follows from Lemma~\ref{lem:anyDegDepth2JKonly} immediately.
\end{proof}

\section{Weighted sum formulas for triple zeta values}
\label{sec:triple}
In this section we will derive weighted sum formula
for triple zeta values $\zeta(2i,2j,2k)$ for fixed $i+j+k$ when the weight factors are arbitrary
symmetric polynomials in $i$, $j$ and $k$. Similar to the double zeta case we will
first consider weighted sum formula for triple products of Bernoulli numbers which leads to
weighted sum formula for triple products of Riemann zeta values. Then we may apply the stuffle
relations to derive the desired weighted sum formula for triple zeta values.

We begin with the following triple sum:
\begin{equation*}
    Z_3(s;r_1,r_2,0)=\sum_{m_1,m_2,m_3=1}^\infty m_1^{r_1}   m_2^{r_2}(m_1+m_2+m_3)^{-s}.
\end{equation*}
By the substitution $m\to m-m_1-m_2$ we see easily that
 \begin{align}
Z_3(s;r_1,r_2,0)=&\sum_{m=1}^\infty\sum_{m_1=1}^{m-1}\sum_{m_2=1}^{m-m_1-1}m_1^{r_1}m_2^{r_2}m^{-s} \notag\\
     =&\sum_{m=1}^\infty m^{-s}\sum_{m_1=1}^{m-1}m_1^{r_1}\sum_{m_2=1}^{m-m_1-1}m_2^{r_2} \notag\\
     =&\sum_{m=1}^\infty m^{-s}\sum_{m_1=1}^{m-1}m_1^{r_1}\left(\sum_{k=0}^{r_2}\binom{r_2+1}{ k}\frac{(-1)^kB_k}{r_2+1}(m-m_1)^{r_2+1-k}-(m-m_1)^{r_2}\right) \notag\\
     =&\sum_{k=0}^{r_2}\binom{r_2+1}{ k}\frac{(-1)^kB_k}{r_2+1} \sum_{m=1}^\infty m^{-s}\sum_{m_1=1}^{m-1}
     m_1^{r_1}(m-m_1)^{r_2+1-k}-Z_2(s;r_1,r_2) \notag\\
     =&\sum_{k=0}^{r_2}\binom{r_2+1}{
     k}\frac{(-1)^kB_k}{r_2+1}Z_2(s;r_1,r_2+1-k)-Z_2(s;r_1,r_2).\label{equ:withLastZ2}
\end{align}
By Proposition~\ref{prop:Z2general}, the first sigma sum of the above
equation equals
\begin{align}
&\sum_{k=0}^{r_2}\binom{r_2+1}{ k}\frac{(-1)^kB_k}{r_2+1}\tau(r_1,r_2+1-k)\zeta(s+k-r_1-r_2-2)\label{equ:Z3r1r201}\\
     +&\sum_{k=0}^{r_2}\binom{r_2+1}{ k}\frac{(-1)^{r_2+1}B_k}{r_2+1}\sum_{l=r_2+2-k}^{r_1+r_2+2-k}\binom{r_1}{l+k-r_2-2}\beta_l \zeta(s+k+l-r_1-r_2-2)\label{equ:Z3r1r202}\\
     +&\sum_{k=0}^{r_2}\binom{r_2+1}{ k}\frac{(-1)^{r_1+k}B_k}{r_2+1}\sum_{l=r_1+1}^{r_1+r_2+2-k} \binom{r_2+1-k}{     l-r_1-1}\beta_{l}\zeta(s+k+l-r_1-r_2-2).\label{equ:Z3r1r203}
\end{align}
For the first part of the above equation, breaking away the term for $k=0$ we get
\begin{equation*}
\eqref{equ:Z3r1r201} 
=\frac{r_1!r_2!}{(r_1+r_2+2)!}\zeta(s-r_1-r_2-2)
+\sum_{k=1}^{r_2}\frac{(-1)^k r_1!r_2!}{(r_1+r_2-k+2)!}\frac{B_k}{k!}\zeta(s-r_1-r_2+k-2).
\end{equation*}
For the second part of the equation,
\begin{align*}
\eqref{equ:Z3r1r202}=&-\frac{(-1)^{r_2}}{r_2+1}\sum_{j=r_2+2}^{r_1+r_2+2}\binom{r_1}{j-r_2-2}\frac{B_j}{j} \zeta(s+j-r_1-r_2-2)\\
&-\sum_{j=r_2+2}^{r_1+r_2+2}(-1)^{r_2}\sum_{k=1}^{r_2}\binom{r_2}{k-1}\binom{r_1}{j-r_2-2}\frac{B_kB_{j-k}}{k(j-k)}
\zeta(s+j-r_1-r_2-2).
\end{align*}
For the third part of the equation,
\begin{align*}
\eqref{equ:Z3r1r203}=&\sum_{j=r_1+1}^{r_1+r_2+2}\frac{(-1)^{r_1}}{r_2+1}\binom{r_2+1}{
j-r_1-1}\frac{B_j}{j}\zeta(s+j-r_1-r_2-2) \\
&+\sum_{k=1}^{r_2}\sum_{j=r_1+1+k}^{r_1+r_2+2}(-1)^{r_1+k}\binom{r_2}{k-1}\binom{r_2+1-k}{j-k-r_1-1}\frac{B_kB_{j-k}}{k(j-k)}
\zeta(s+j-r_1-r_2-2).
\end{align*}
Setting $s=r_1+r_2+3-2n$ in the above equations and using Proposition~\ref{prop:Z2general}
and Eq.~\eqref{equ:EulerFamousID}, from Eq.~\eqref{equ:withLastZ2} we can get
{\allowdisplaybreaks\begin{align}
&Z_3(r_1+r_2+3-2n;r_1,r_2,0)+Z_2(r_1+r_2+3-2n;r_1,r_2)\notag\\
=&-\frac{r_1!r_2!}{(r_1+r_2+2)!}\frac{B_{2n}}{2n}
-\sum_{k=1}^{r_2}\frac{(-1)^k r_1!r_2!}{(r_1+r_2-k+2)!}\frac{B_k}{k!}\frac{B_{2n-k}}{2n-k}\notag\\
+&\sum_{k=0}^{r_2}\sum_{j=r_2+2}^{r_1+r_2+2} \frac{(-1)^{r_2}}{r_2+1} \binom{r_2+1}{k}\binom{r_1}{j-r_2-2}\frac{B_kB_{j-k}}{j-k}
\frac{B_{2n-j}}{2n-j}\notag\\
-&\sum_{k=0}^{r_2}\sum_{j=r_1+1+k}^{r_1+r_2+2} \frac{(-1)^{r_1+k}}{r_2+1}  \binom{r_2+1}{k}\binom{r_2+1-k}{j-k-r_1-1}\frac{B_kB_{j-k}}{j-k}
\frac{B_{2n-j}}{2n-j}.\label{equ:WtSum3}
\end{align}}

On the other hand, we can compute the value of $Z_3(s;r_1,r_2,r_3)$
using some integrals:
\begin{align*}
   Z_3(s;r_1,r_2,r_3)=&J^3\Big(x_1^{r_1}x_2^{r_2}x_3^{r_3}(x_1+x_2+x_3)^{-s}\Big)\\
   &+\sum_{{\rm cyc}(r_1,r_2,r_3)} J^2\Big(\int_0^{-x_1-x_2} x_1^{r_1}x_2^{r_2}x_3^{r_3}(x_1+x_2+x_3)^{-s}
   \,dx_3\Big)\\
   &+\sum_{{\rm cyc}(r_1,r_2,r_3)} J^1\Big(\int_0^{-x_1}\int_0^{-x_1-x_2} x_1^{r_1}x_2^{r_2}x_3^{r_3}(x_1+x_2+x_3)^{-s}
   \,dx_3dx_2\Big).
\end{align*}
For $J^3$ we use \eqref{equ:Eie'sJJ2} to get
\begin{align*}
&J^3\Big(\sum_{i+j+k=-s}\binom{-s}{i,j,k}x_1^{r_1+i}x_2^{r_2+j}x_3^{r_3+k}\Big) \\
=&\sum_{i+j+k=-s}\binom{-s}{i,j,k}(-1)^{r_1+r_2+r_3-s}\beta_{r_1+i+1}\beta_{r_2+j+1}\beta_{r_3+k+1}.
\end{align*}
For the integral in $J^2$ we use the substitution $x_3=-(x_1+x_2)t$ and
Eq.~\eqref{equ:Eie'sJJ2} to get
{\allowdisplaybreaks
\begin{align*}
&J^2\Big(\int_0^{-x_1-x_2}
x_1^{r_1}x_2^{r_2}x_3^{r_3}(x_1+x_2+x_3)^{-s}
   \,dx_3\Big)\\
=&J^2\Big(\int_0^1
x_1^{r_1}x_2^{r_2}(-x_1-x_2)^{r_3}t^{r_3}(x_1+x_2)^{-s}(1-t)^{-s}(-x_1-x_2)
   \,dt\Big)\\
=&J^2\Big((-1)^{r_3+1}
x_1^{r_1}x_2^{r_2}(x_1+x_2)^{r_3+1-s}\int_0^1t^{r_3}(1-t)^{-s}
   \,dt\Big)\\
=&J^2\Big(
x_1^{r_1}x_2^{r_2}(x_1+x_2)^{r_3+1-s}\Big)(-1)^{r_3+1}\tau(r_3,-s)\\
=&\sum_{i+j=r_3+1-s}\binom{r_3+1-s}{i,j}(-1)^{r_1+r_2-s}\beta_{r_1+i+1}\beta_{r_2+j+1}\tau(r_3,-s).
\end{align*}}
So
\begin{align*}
&J^2\Big(\int_0^{-x_1-x_3}
x_1^{r_1}x_2^{r_2}x_3^{r_3}(x_1+x_2+x_3)^{-s}
   \,dx_2\Big)\\
=&\sum_{i+j=r_2+1-s}\binom{r_2+1-s}{i,j}(-1)^{r_1+r_3-s}\beta_{r_1+i+1}\beta_{r_3+j+1}\tau(r_2,-s),
\end{align*}
and
\begin{align*}
&J^2\Big(\int_0^{-x_2-x_3}
x_1^{r_1}x_2^{r_2}x_3^{r_3}(x_1+x_2+x_3)^{-s}
   \,dx_2\Big)\\
=&\sum_{i+j=r_1+1-s}\binom{r_1+1-s}{i,j}(-1)^{r_2+r_3-s}\beta_{r_2+i+1}\beta_{r_3+j+1}\tau(r_1,-s).
\end{align*}
For the integral in $J^1$ we use substitution $x_3=-(x_1+x_2)t_1,$
and \eqref{equ:eq00} to get
{\allowdisplaybreaks
\begin{align*}
&J^1\Big(\int_0^{-x_1}\int_0^{-x_1-x_2}
x_1^{r_1}x_2^{r_2}x_3^{r_3}(x_1+x_2+x_3)^{-s}
   \,dx_3dx_2\Big)\\
=&J^1\Big(\int_0^{-x_1}(-1)^{r_3+1}
x_1^{r_1}x_2^{r_2}(x_1+x_2)^{r_3+1-s}dx_2\int_0^1t^{r_3}(1-t)^{-s}
   \,dt_1\Big)\\
=&(-1)^{r_1-s+r_3+1}\tau(r_2,-s+r_3+1)\beta_{r_1+r_2-s+r_3+1+2}(-1)^{r_3+1}\tau(r_3,-s)\\
=&(-1)^{r_1-s}\tau(r_2,-s+r_3+1)\tau(r_3,-s)\beta_{r_1+r_2+r_3-s+3}\\
=&(-1)^{r_1-s}\frac{r_2!r_3!(-s)!}{(r_2+r_3-s+2)!}\beta_{r_1+r_2+r_3-s+3}.
\end{align*}}
Combining the above equations by setting $r_3=0,s=r_1+r_2+3-2n$ we find
{\allowdisplaybreaks
\begin{align*}
  \ &Z_3(r_1+r_2+3-2n;r_1,r_2,0)\\
   =&-\sum_{i+j+k=2n-3-r_1-r_2}\binom{2n-3-r_1-r_2}{i,j,k}\beta_{r_1+i+1}\beta_{r_2+j+1}\beta_{k+1}\\
   &-\sum_{i+j=2n-2-r_1-r_2}\binom{2n-2-r_1-r_2}{i,j}\beta_{r_1+i+1}\beta_{r_2+j+1}\tau(0,2n-3-r_1-r_2)\\
   &-\Cyc{r_1,r_2} (-1)^{r_2}\sum_{i+j=2n-2-r_1}\binom{2n-2-r_1}{i,j}\beta_{r_1+i+1}\beta_{j+1}\tau(r_2,2n-3-r_1-r_2)\\
&+(-1)^{-s}\frac{r_1!r_2!(-s)!}{(r_1+r_2-s+2)!}\beta_{2n}
+\Cyc{r_1,r_2} (-1)^{r_1-s}\frac{r_2!r_3!(-s)!}{(r_2+r_3-s+2)!}\beta_{2n}\\
   =&-\sum_{i+j+k=2n-3-r_1-r_2}\binom{2n-3-r_1-r_2}{i,j,k}\frac{B_{r_1+i+1}B_{r_2+j+1}B_{k+1}}{(r_1+i+1)(r_2+j+1)(k+1)}\\
   &-\frac{1}{2n-2-r_1-r_2}\sum_{i+j=2n-2-r_1-r_2}\binom{2n-2-r_1-r_2}{i,j}\frac{B_{r_1+i+1}}{r_1+i+1}\frac{B_{r_2+j+1}}{r_2+j+1}\\
   &-\Cyc{r_1,r_2} \frac{(-1)^{r_2}r_2!(2n-3-r_1-r_2)!}{(2n-2-r_1)!}\sum_{i+j=2n-2-r_1}\binom{2n-2-r_1}{i,j}\frac{B_{r_1+i+1}}{r_1+i+1}\frac{B_{j+1}}{j+1}\\
  &-\left((-1)^{r_1+r_2}\frac{r_1!r_2!(2n-3-r_1-r_2)!}{(2n-1)!}+\Cyc{r_1,r_2} (-1)^{r_2}\frac{r_2!(2n-3-r_1-r_2)!}{(2n-1-r_1)!}\right)\frac{B_{2n}}{2n}.
\end{align*}
}
By comparing this with Eq.~\eqref{equ:WtSum3} we obtain
{\allowdisplaybreaks
\begin{align*}
&(2n-3-r_1-r_2)!\sum_{\substack{i+j+k=2n\\ i,j,k\ge 1}}\frac{B_{i}B_{j}B_{k}}{i!j!k!}
    \left\{\prod_{a=1}^{r_1}(i-a)\prod_{b=1}^{r_2}(j-b)\right\} \\
   &+(2n-3-r_1-r_2)!\sum_{\substack{i+j=2n\\ i,j\ge 1}}\frac{B_{i}}{i!}\frac{B_{j}}{j!}
    \left\{\prod_{a=1}^{r_1}(i-a)\prod_{b=1}^{r_2}(j-b)\right\}   \\
   &+\Cyc{r_1,r_2}(-1)^{r_2}r_2!(2n-3-r_1-r_2)!\sum_{\substack{i+j=2n\\ i,j\ge 1}}\frac{B_{i}}{i!}\frac{B_{j}}{j!}
    \left\{\prod_{a=1}^{r_1}(i-a) \right\}   \\
   &+\left((-1)^{r_1+r_2}\frac{r_1!r_2!(2n-3-r_1-r_2)!}{(2n-1)!}
    +\Cyc{r_1,r_2} (-1)^{r_1}\frac{r_1!(2n-3-r_1-r_2)!}{(2n-1-r_2)!}\right)\frac{B_{2n}}{2n} \\
=&\frac{r_1!r_2!}{(r_1+r_2+2)!}\frac{B_{2n}}{2n}
+\sum_{k=1}^{r_2}\frac{(-1)^k r_1!r_2!}{(r_1+r_2-k+2)!}\frac{B_k}{k!}\frac{B_{2n-k}}{2n-k} \\
&- \sum_{k=0}^{r_2}\sum_{j=r_2+2}^{r_1+r_2+2} \frac{(-1)^{r_2}}{r_2+1} \binom{r_2+1}{k}\binom{r_1}{j-r_2-2}\frac{B_kB_{j-k}}{j-k}
\frac{B_{2n-j}}{2n-j} \\
&+\sum_{k=0}^{r_2}\sum_{j=r_1+1+k}^{r_1+r_2+2} \frac{(-1)^{r_1+k}}{r_2+1} \binom{r_2+1}{k}\binom{r_2+1-k}{j-k-r_1-1}\frac{B_kB_{j-k}}{j-k}
\frac{B_{2n-j}}{2n-j}+Z_2(r_1+r_2+3-2n;r_1,r_2).
\end{align*}}
By Proposition~\ref{prop:Z2general} we have
\begin{multline}\label{equ:Z2term}
Z_2(r_1+r_2+3-2n;r_1,r_2)
=-\Cyc{r_1,r_2} (-1)^{r_1}\sum_{k=r_1+1}^{r_1+r_2+1}\binom{r_2}{k-r_1-1} \beta_k \beta_{2n-k-1}\\
=(\delta_{r_1,0} +\delta_{r_2,0}) \frac{B_{2n-2}}{4n-4}.
\end{multline}
Using Theorem \ref{thm:myGenBernoulliWtSumDepth2} and \eqref{equ:Z2term} we can obtain the next result.
\begin{thm}\label{thm:ProdBernDepth3}
For any nonnegative integers $r_1$ and $r_2$ and positive integer $n\ge r_1+r_2=2$ we have
{\allowdisplaybreaks
\begin{align*} &     \frac{1}{r_1!r_2!}\sum_{\substack{i+j+k=2n\\ i,j,k\ge 1}}\frac{B_{i}B_{j}B_{k}}{i!j!k!}
\left\{\prod_{a=1}^{r_1}(i-a)\prod_{b=1}^{r_2}(j-b)\right\}  \\
=& \Cyc{r_1,r_2}
 \left(\frac{(-1)^{r_1+r_2} B_{r_1+1}B_{2n-r_1-1}}{(r_1+1)!(2n-r_1-1)!}
+\frac{\delta_{r_1,0}}{2} \binom{2n-3}{r_2} \frac{B_{2n-2}}{(2n-2)!} \right)\\
& + \left(\binom{2n}{r_1+r_2+2}+\Cyc{r_1,r_2}(-1)^{r_2}\binom{2n}{r_1+1} +(-1)^{r_1+r_2}\right)\frac{B_{2n}}{(2n)!} \\
& + \Cyc{r_1,r_2}(-1)^{r_1} \sum_{k=r_1+1}^{r_1+r_2+1} \binom{k-1}{r_1}\binom{2n-k-1}{r_2+r_1+1-k}
 \frac{B_{k} }{k!}  \frac{B_{2n-k}}{(2n-k)!}\\
&+\Cyc{r_1,r_2}(-1)^{r_2}  \sum_{k=1}^{r_1+1} \binom{2n-1-k}{r_1+1-k}
 \frac{B_{k} }{k!}  \frac{B_{2n-k}}{(2n-k)!}\\
&+\sum_{k=1}^{r_2} (-1)^k  \binom{2n-1-k}{r_1+r_2+2-k}\frac{B_k}{k!}\frac{B_{2n-k}}{(2n-k)!}\\
&-\sum_{k=0}^{r_2}\sum_{j=r_2+2}^{r_1+r_2+1}  (-1)^{r_2}  \binom{j-1-k}{r_2+1-k}\binom{2n-1-j}{r_1+r_2+2-j} \frac{B_kB_{j-k}B_{2n-j}}{k!(j-k)!(2n-j)!} \\
&+\sum_{k=0}^{r_2}\sum_{j=r_1+1+k}^{r_1+r_2+1}  (-1)^{r_1+k}  \binom{j-1-k}{r_1}\binom{2n-1-j}{r_1+r_2+2-j} \frac{B_kB_{j-k}B_{2n-j}}{k!(j-k)!(2n-j)!} .
\end{align*}}
\end{thm}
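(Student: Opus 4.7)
The plan is to follow Eie's two-method strategy that was already laid out in detail in the equations preceding the statement, and then perform a final consolidation. Concretely, I would compute $Z_3(s;r_1,r_2,0)$ at the specific negative integer $s = r_1+r_2+3-2n$ in two different ways and equate the resulting expressions. Method~A reduces $Z_3$ to a $\Q$-linear combination of $Z_2$-values via the identity~\eqref{equ:withLastZ2}; then Proposition~\ref{prop:Z2general} expresses each such $Z_2$ in terms of Riemann zeta values and Bernoulli numbers $\beta_k = B_k/k$, producing the three sums~\eqref{equ:Z3r1r201}--\eqref{equ:Z3r1r203}. Method~B uses Eie's integral representation with the multilinear operator $J^m$ from~\eqref{equ:Eie'sJJ2}, applied to the triple integral over the appropriate simplices; the purely algebraic $J^3$ term produces the sum over triple products of Bernoulli numbers, while the $J^2$ and $J^1$ terms are handled by the same beta-function substitution $y=-xt$ (respectively $x_3 = -(x_1+x_2)t$) used in~\eqref{equ:eq00}.

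Once both expressions are in hand, setting them equal gives the long identity displayed immediately before the theorem, in which the left hand side contains both the desired triple Bernoulli sum and an auxiliary double Bernoulli sum of the form
\[
(2n-3-r_1-r_2)!\sum_{\substack{i+j=2n\\i,j\ge 1}}\frac{B_i}{i!}\frac{B_j}{j!}\prod_{a=1}^{r_1}(i-a)\prod_{b=1}^{r_2}(j-b),
\]
together with cyclic variants. The next step is to eliminate every such $B_i B_j$ term by applying Theorem~\ref{thm:myGenBernoulliWtSumDepth2} to each of them, so that all expressions involving only products of two Bernoulli numbers are rewritten in the canonical normal form with $\binom{2n-1}{\cdot}$-coefficients used in the statement of Theorem~\ref{thm:ProdBernDepth3}.

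The remaining $Z_2(r_1+r_2+3-2n;r_1,r_2)$ contribution on the right side is then evaluated by specializing Proposition~\ref{prop:Z2general}: as recorded in~\eqref{equ:Z2term}, the entire $\Cyc{r_1,r_2}$ sum collapses to $(\delta_{r_1,0}+\delta_{r_2,0})\frac{B_{2n-2}}{4n-4}$, since the only admissible surviving index $k$ forces $B_{2n-k-1}$ to be an odd-indexed Bernoulli number (and hence zero) except when one of $r_1$, $r_2$ vanishes. Substituting this into the equated identity, clearing the factor $r_1!r_2!$, and collecting the $B_{2n}$, $B_k B_{2n-k}$, and $B_k B_{j-k} B_{2n-j}$ contributions term by term yields exactly the formula in the theorem.

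The principal obstacle I anticipate is purely bookkeeping: the cyclic sums $\Cyc{r_1,r_2}$ interact nontrivially with the telescoping between the $j$-index ranges in~\eqref{equ:Z3r1r202}--\eqref{equ:Z3r1r203} and the Kronecker-delta contribution from~\eqref{equ:Z2term}, and one has to be careful to combine the $B_{r_1+1}B_{2n-r_1-1}$ boundary terms (which come from the $k=0$ and $j=r_1+1$ edges of the $J^3$ expansion) with the Theorem~\ref{thm:myGenBernoulliWtSumDepth2} output so that no spurious odd-indexed Bernoulli numbers remain. Once that accounting is done correctly, the five single sums $\sum_k$ and the two double sums $\sum_{k,j}$ in the target formula fall out directly.
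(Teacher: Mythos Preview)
Your proposal is correct and follows essentially the same route as the paper: compute $Z_3(r_1+r_2+3-2n;r_1,r_2,0)$ two ways (via the reduction \eqref{equ:withLastZ2} to $Z_2$-values and via Eie's $J^m$ integral operators), equate, eliminate the auxiliary $B_iB_j$ sums with Theorem~\ref{thm:myGenBernoulliWtSumDepth2}, and insert the evaluation \eqref{equ:Z2term}. All of this is in fact carried out in the paper in the discussion \emph{preceding} the theorem, so the formal proof environment in the paper is extremely short: it only checks one residual cancellation you did not isolate, namely that in the two double sums the terms with $j=r_1+r_2+2$ (present in the raw identity just before the theorem) drop out, allowing the upper limit to be lowered to $r_1+r_2+1$. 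The argument is that for such a term to be nonzero one needs $k$ and $r_1+r_2$ even, whence $(-1)^{r_2}=(-1)^{r_1+k}$, and $\binom{j-1-k}{r_2+1-k}=\binom{j-1-k}{r_1}$ when $j=r_1+r_2+2$, so the two double sums cancel at that index. You should make this explicit, since otherwise the upper limits in your final formula will not match the statement.
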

\begin{proof}
We only need to show that all the terms with $j=r_1+r_2+2$ cancel out. Indeed, for such a term to be nonzero both $k$ and $r_1+r_2$ must be even. Hence the the sign $(-1)^{r_2} $ in the penultimate sum and the sign $(-1)^{r_1+k}$ in the last sum are the same. Moreover
\begin{equation*}
     \binom{j-1-k}{r_2+1-k}=\binom{j-1-k}{r_1}
\end{equation*}
when $j=r_1+r_2+2$. Therefore all these terms cancel out.
\end{proof}

\begin{eg} \label{eg:BernDepth3}
Set $\sum=\sum_{\substack{i+j+k=2n\\ i,j,k\ge 1}}$.
For small $r_1$ and $r_2$ we may use Theorem~\ref{thm:ProdBernDepth3} for $n\ge r_1+r_2+2$
and direct computation for small $n$ to verify the following identities which are valid for all $n\ge 2$:
{\allowdisplaybreaks
\begin{align*} 
\sum \frac{B_{i}B_{j}B_{k}}{i!j!k!}
    =& \binom{2n+2}{2}\frac{B_{2n}}{(2n)!}+\frac{B_{2n-2}}{(2n-2)!},\\
\sum i\frac{B_{i}B_{j}B_{k}}{i!j!k!}
    =&\binom{2n+2}{3}\frac{B_{2n}}{(2n)!}+\frac{2n}{3}\frac{B_{2n-2}}{(2n-2)!}  ,\\
\sum ij\frac{B_{i}B_{j}B_{k}}{i!j!k!}
    =&\binom{2n+2}{4} \frac{B_{2n}}{(2n)!}-\frac{2n^2-19n+12}{12}\frac{B_{2n-2}}{(2n-2)!},\\
\sum i^2j\frac{B_{i}B_{j}B_{k}}{i!j!k!}
    =&\frac{4n+1}{5} \binom{2n+2}{4}\frac{B_{2n}}{(2n)!}
    +\frac{10n^2-11n+6}{12}\frac{B_{2n-2}}{(2n-2)!}-\frac{2n-5}{60}\frac{B_{2n-4}}{(2n-4)!},\\
\sum ijk\frac{B_{i}B_{j}B_{k}}{i!j!k!}
=&\binom{2n+2}{5}\frac{B_{2n}}{(2n)!}
    -\frac{2n^3-9n^2+n+6}{6}\frac{B_{2n-2}}{(2n-2)!}+\frac{2n-5}{30}\frac{B_{2n-4}}{(2n-4)!},\\
\sum i^3\frac{B_{i}B_{j}B_{k}}{i!j!k!}
=&\frac{12n^2+12n+1}{10}\binom{2n+2}{3}\frac{B_{2n}}{(2n)!}  \\
&+  \frac{20n^3-48n^2+35n-6}{6} \frac{B_{2n-2}}{(2n-2)!}
+\frac{2n-5}{30}\frac{B_{2n-4}}{(2n-4)!},\\
\sum i^2j^2\frac{B_{i}B_{j}B_{k}}{i!j!k!}
=&\frac{8n^2+4n+3}{15}\binom{2n+2}{4}\frac{B_{2n}}{(2n)!}  \\
&+ \frac{8n^4-72n^3+232n^2-261n+108}{36}\frac{B_{2n-2}}{(2n-2)!}
-\frac{(7n+3)(2n-5)}{180}\frac{B_{2n-4}}{(2n-4)!} ,\\
\sum i^3j\frac{B_{i}B_{j}B_{k}}{i!j!k!}
=&\frac{4n^2+2n-1}{5}\binom{2n+2}{4}\frac{B_{2n}}{(2n)!}  \\
&+ \frac{32n^3-98n^2+107n-36}{12}\frac{B_{2n-2}}{(2n-2)!}
-\frac{(n-1)(2n-5)}{60}\frac{B_{2n-4}}{(2n-4)!}  ,\\
\sum i^4\frac{B_{i}B_{j}B_{k}}{i!j!k!}
=&\frac{(8n^2+8n-1)(2n+1)}{10}\binom{2n+2}{3}\frac{B_{2n}}{(2n)!}  \\
&+ \frac{40n^4-128n^3+168n^2-119n+36}{6}\frac{B_{2n-2}}{(2n-2)!}
+\frac{(3n-1)(2n-5)}{60}\frac{B_{2n-4}}{(2n-4)!}  .
\end{align*}}
For all $n\ge 3$ we have
\begin{align*} 
\sum i^5\frac{B_{i}B_{j}B_{k}}{i!j!k!}
=&\frac{(8n^2+8n+1)(4n^2+4n-1)}{14}\binom{2n+2}{3}\frac{B_{2n}}{(2n)!}  \\
&+ \frac{80n^5-320n^4+560n^3-520n^2+219n-30}{6}\frac{B_{2n-2}}{(2n-2)!} \\
&+ \frac{40n^4-128n^3+168n^2-119n+36}{6}\frac{B_{2n-4}}{(2n-4)!} \\
&+\frac{(2n-5)(2n^2-6n+7)}{6}\frac{B_{2n-6}}{(2n-6)!}  .
\end{align*}
\end{eg}

\begin{thm}\label{thm:ProdDepth3}
Let $F(x,y,z)\in\Q[x,y,z]$ be a polynomial of degree $d$. Then for every positive integer $n\ge d+2$ we have
\begin{equation}
  \sum_{\substack{i+j+k=n\\ i,j,k\ge 1}}  F(i,j,k)   \zeta(2i) \zeta(2j) \zeta(2k) \\
= \sum_{k=0}^{\lfloor(d+1)/2\rfloor} K_{F,k}(n)\zeta(2k)\zeta(2n-2k),
\end{equation}
where $K_{F,k}(x)$ is a polynomial in $x$ depending only on $F$ and $k$ which can be explicitly given using
Theorem~\ref{thm:ProdBernDepth3}. Moreover, $\deg K_{F,0}(x)=d+2$ and
$\deg K_{F,k}(x)\le d$ for all $k\ge 1$.
\end{thm}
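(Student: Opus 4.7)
The plan is to carry out the depth-three analogue of the proof of Theorem~\ref{thm:myGenWtSumDepth2ZetaPolyCoeff}, with Theorem~\ref{thm:ProdBernDepth3} playing the role of the Bernoulli-number engine. By $\Q$-linearity it suffices to treat a single monomial $F(x,y,z) = x^{m_1} y^{m_2} z^{m_3}$ of total degree $d = m_1 + m_2 + m_3$. On the support of the sum we have $k = n - i - j$, so
\[
F(i,j,k) = F(i, j, n-i-j) = \sum_{a+b \le d} g_{a,b}(n)\, i^a j^b,
\]
where $g_{a,b}(n) \in \Q[n]$ satisfies $\deg_n g_{a,b} \le d - a - b$. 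This reduces the theorem to controlling the individual sums
\[
A_{a,b}(n) := \sum_{\substack{i+j+k = n\\ i,j,k \ge 1}} i^a j^b\, \zeta(2i)\zeta(2j)\zeta(2k)
\]
for each pair with $a + b \le d$.

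To evaluate $A_{a,b}(n)$, I first apply the Stirling-number expansion \eqref{equ:Stirling} in each of the variables $2i$ and $2j$ and then split $\binom{2m}{r} = \binom{2m-1}{r} + \binom{2m-1}{r-1}$, exactly as in the proof of Theorem~\ref{thm:myGenWtSumDepth2ZetaPolyCoeff}, rewriting $(2i)^a (2j)^b$ as a rational combination of basic weights $\prod_{\alpha=1}^{r_1}(2i-\alpha) \prod_{\beta=1}^{r_2}(2j-\beta)$ with $r_1 \le a$ and $r_2 \le b$. Euler's identity \eqref{equ:EulerFamousID} then converts each basic sum
\[
\sum_{\substack{i+j+k=n\\ i,j,k \ge 1}} \prod_{\alpha=1}^{r_1}(2i-\alpha) \prod_{\beta=1}^{r_2}(2j-\beta)\, \zeta(2i)\zeta(2j)\zeta(2k)
\]
into a weighted Bernoulli triple sum ranging only over \emph{even} positive parts summing to $2n$. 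Since $B_m = 0$ for odd $m \ge 3$, this even-parts sum differs from the all-positive sum appearing on the left side of Theorem~\ref{thm:ProdBernDepth3} only by the contribution from triples with exactly two parts equal to $1$, which evaluates (after the factors $\prod_{\alpha=1}^{r_1}(1-\alpha)$ annihilate the $(1,1,2n-2)$ and $(1,2n-2,1)$ summands whenever $r_1 \ge 1$, and their symmetric variants) to an explicit multiple of $B_{2n-2}/(2n-2)!$ by a polynomial in $n$ of degree $\le r_1 + r_2$. Substituting the evaluation of Theorem~\ref{thm:ProdBernDepth3} and converting back through \eqref{equ:EulerFamousID} writes $A_{a,b}(n)$ as a $\Q[n]$-linear combination of $\zeta(0)\zeta(2n) = -\tfrac{1}{2}\zeta(2n)$ and products $\zeta(2k)\zeta(2n-2k)$ for $1 \le k \le \lfloor (a+b+1)/2 \rfloor$, where the coefficient of $\zeta(2n)$ has $n$-degree $\le a+b+2$ (its leading contribution coming from the single binomial $\binom{2n}{r_1+r_2+2}$ on the right side of Theorem~\ref{thm:ProdBernDepth3}) and every other coefficient has $n$-degree $\le a+b$.

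Assembling $K_{F,k}(n) = \sum_{a,b} g_{a,b}(n) \cdot [\text{coefficient of } \zeta(2k)\zeta(2n-2k) \text{ in } A_{a,b}(n)]$ and using $\deg_n g_{a,b} \le d - a - b$ yields $\deg K_{F,0} \le d+2$ and $\deg K_{F,k} \le d$ for $k \ge 1$, as required. The main obstacle is the strict equality $\deg K_{F,0} = d+2$: one must verify that the leading $n^{d+2}$ contributions from the different $(a,b)$ pairs do not conspire to cancel. This follows from the unit triangularity of the Stirling change of basis $i^a = 2^{-a}\sum_r r! S(a,r)\binom{2i}{r}$ together with $S(a,a) = 1$, which implies that the only pairs $(a,b)$ contributing at order $n^{d+2}$ come from the top-degree part of $F(i,j,n-i-j)$ (in the grading $\deg i = \deg j = \deg n = 1$), and that these pairs contribute a strictly positive rational multiple of the leading coefficient of $F$ through the $\binom{2n}{r_1+r_2+2}$ factor, precluding any cancellation.
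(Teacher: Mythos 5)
Your overall strategy is exactly the paper's: reduce to monomials, eliminate the third variable via $k=n-i-j$, pass from powers to the product weights of Theorem~\ref{thm:ProdBernDepth3} via the Stirling expansion \eqref{equ:Stirling}, and translate back and forth with Euler's identity \eqref{equ:EulerFamousID}. The published proof is only a three-sentence sketch, and your accounting of the details it suppresses --- in particular the discrepancy between the all-parts Bernoulli sum of Theorem~\ref{thm:ProdBernDepth3} and the even-parts sum coming from $\zeta(2i)\zeta(2j)\zeta(2k)$, which you correctly reduce to the triples with two parts equal to $1$ and hence to a $B_{2n-2}$ correction of controlled degree --- is sound, as is the degree bookkeeping giving $\deg K_{F,0}\le d+2$ and $\deg K_{F,k}\le d$ for $k\ge 1$.

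The gap is in your last paragraph. The leading coefficient of $K_{F,0}$ is not ``a strictly positive rational multiple of the leading coefficient of $F$'': a degree-$d$ polynomial has a leading \emph{form}, not a single leading coefficient. The monomial $i^aj^b n^{d-a-b}$ in the top-degree part of $F(i,j,n-i-j)$ contributes $\tfrac{a!\,b!}{(a+b+2)!}$ times its coefficient to the $n^{d+2}$ term of $K_{F,0}$; equivalently, that term is
\begin{equation*}
\iint_{x,y\ge 0,\ x+y\le 1} F_d(x,y,1-x-y)\,dx\,dy,
\end{equation*}
where $F_d$ is the top homogeneous part of $F$. The weights $\tfrac{a!\,b!}{(a+b+2)!}$ are indeed positive, but the coefficients of $F_d$ need not be, so cancellation is entirely possible: for $F(x,y,z)=x-y$ the whole sum vanishes by antisymmetry in $i\leftrightarrow j$, so $K_{F,0}=0$ and the strict equality $\deg K_{F,0}=d+2$ fails. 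Your ``precluding any cancellation'' step is therefore wrong, and the equality asserted in the theorem is itself false for general $F$; it holds exactly when the simplex integral of $F_d$ above is nonzero (e.g.\ for monomials, or for $F$ with nonnegative top-degree coefficients). The paper's own proof is silent on this point, so the defect you should report is in the statement rather than only in your argument.
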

\begin{proof}
We can apply Euler's identity in Eq.~\eqref{equ:EulerFamousID} to Theorem~\ref{thm:ProdBernDepth3} to derive a weighted sum formula for
the triple product of Riemann zeta values. Then we may use Eq.~\eqref{equ:Stirling} to change the weight factors to any monomial of
the form $i^{r_1}j^{r_2}$. Finally we can deduce the theorem by observing that
$i^{r_1}j^{r_2}k^{r_3}=i^{r_1}j^{r_2}(n-i-j)^{r_3}$.
\end{proof}

\begin{eg} \label{eg:TripleZetaDepth3}
Setting $\sum=\sum_{\substack{i+j+k=n\\ i,j,k\ge 1}}$ we have
{\allowdisplaybreaks
\begin{align}
  \sum   \zeta(2i) \zeta(2j) \zeta(2k)
= & \frac{(n+1)(2n+1)}{4}\zeta(2n)-\frac32\zeta(2n-2)\zeta(2),\label{equ:TplProdRiemannZetaFactor=1}\\
  \sum i  \zeta(2i) \zeta(2j) \zeta(2k)
= & \frac{n(n+1)(2n+1)}{12}\zeta(2n)-\frac{n}{2}\zeta(2n-2)\zeta(2),\notag \\
  \sum  ij \zeta(2i) \zeta(2j) \zeta(2k)
= & \frac{n(4n^2-1)(n+1)}{96}\zeta(2n)+\frac{(2n-1)(n-3)}{8}\zeta(2n-2)\zeta(2), \label{equ:TplProdRiemannZetaFactor=ij}\\
  \sum i^2 j  \zeta(2i) \zeta(2j) \zeta(2k)
= & \frac{n(4n+1)(4n^2-1)(n+1)}{960}\zeta(2n)   \notag  \\
&+\frac{(2n-1)(n-3)}{16}\zeta(2n-2)\zeta(2)+\frac{3(2n-5)}{4}\zeta(2n-4)\zeta(4), \notag \\
  \sum ijk  \zeta(2i) \zeta(2j) \zeta(2k)
= & \frac{n(n^2-1)(4n^2-1)}{480}\zeta(2n)  \notag \\
&+\frac{(n-1)(n-3)(2n-1)}{8}\zeta(2)\zeta(2n-2) \notag
+ \frac32(2n-5)\zeta(4)\zeta(2n-4).
\end{align}}
\end{eg}

\begin{thm}\label{thm:MZVDepth3}
Let $F(x,y,z)\in\Q[x,y,z]$ be a symmetric polynomial of degree $d$. Then for every positive integer $n\ge d+2$ we have
\begin{equation}
  \sum_{\substack{i+j+k=n\\ i,j,k\ge 1}}  F(i,j,k)   \zeta(2i,2j,2k) \\
= \sum_{k=0}^{\lfloor(d+1)/2\rfloor} C_{F,k}(n)\zeta(2k)\zeta(2n-2k),
\end{equation}
where $C_{F,k}(x)$ is a polynomial in $x$ depending only on $F$ and $k$ with
$\deg C_{F,k}(x)\le d+2$ for all $k\ge 0$.
\end{thm}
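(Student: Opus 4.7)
The plan is to apply the iterated stuffle product for three Riemann zeta values and then reduce each of the resulting pieces using Theorems~\ref{thm:myGenWtSumDepth2ZetaPolyCoeff} and~\ref{thm:ProdDepth3}. Repeatedly using $\zeta(u)\zeta(v)=\zeta(u,v)+\zeta(v,u)+\zeta(u+v)$, and then reassembling the resulting six depth-$2$ MZVs back into stuffle products via $\zeta(u,v)+\zeta(v,u)=\zeta(u)\zeta(v)-\zeta(u+v)$, one arrives at the identity
\begin{equation*}
\zeta(2i)\zeta(2j)\zeta(2k)=\sum_{\sigma\in S_3}\zeta(2\sigma(i),2\sigma(j),2\sigma(k))+\zeta(2(i+k))\zeta(2j)+\zeta(2(i+j))\zeta(2k)+\zeta(2(j+k))\zeta(2i)-2\zeta(2n),
\end{equation*}
where $n=i+j+k$.

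Multiplying this identity by the symmetric weight $F(i,j,k)$ and summing over $i+j+k=n$ with $i,j,k\ge 1$, the $S_3$-symmetry of both $F$ and the sum range permits one to relabel summation variables in every permutation. All six depth-$3$ terms collapse to a single $\sum F(i,j,k)\zeta(2i,2j,2k)$ counted with multiplicity $6$, and the three depth-$2$ products collapse to a single $\sum F(i,j,k)\zeta(2(i+k))\zeta(2j)$ counted with multiplicity $3$. Solving for the triple-zeta sum yields
\begin{equation*}
6\sum F(i,j,k)\zeta(2i,2j,2k)=\sum F(i,j,k)\zeta(2i)\zeta(2j)\zeta(2k)-3\sum F(i,j,k)\zeta(2(i+k))\zeta(2j)+2\zeta(2n)\sum F(i,j,k).
\end{equation*}

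Each right-hand side piece is then dispatched by a previously established result. The first sum falls directly under Theorem~\ref{thm:ProdDepth3}, producing $\sum_\ell K_{F,\ell}(n)\zeta(2\ell)\zeta(2n-2\ell)$ with $\deg K_{F,\ell}\le d+2$. The third is the product of $\zeta(2n)=-2\zeta(0)\zeta(2n)$ with $\sum F(i,j,k)$, a polynomial in $n$ of degree at most $d+2$ (summing a degree-$d$ polynomial over the $2$-dimensional simplex $\{i+j+k=n,\,i,j,k\ge 1\}$ raises the degree by $2$), and so feeds purely into the $\ell=0$ coefficient. For the middle sum, the substitution $m=i+k$ yields
\begin{equation*}
\sum F(i,j,k)\zeta(2(i+k))\zeta(2j)=\sum_{m+j=n}G(m,j)\zeta(2m)\zeta(2j),\qquad G(m,j):=\sum_{i=1}^{m-1}F(i,j,m-i),
\end{equation*}
where $G\in\Q[m,j]$ has total degree at most $d+1$ (Faulhaber evaluation of the $i$-sum raises the $m$-degree by one). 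Applying Theorem~\ref{thm:myGenWtSumDepth2ZetaPolyCoeff} to $G$ then expresses this as another combination $\sum_\ell K_{G,\ell}(n)\zeta(2\ell)\zeta(2n-2\ell)$ with polynomial coefficients of degree at most $d+2$.

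Adding the three contributions and dividing by $6$ produces the stated identity with $\deg C_{F,\ell}\le d+2$. The main technical obstacle will be reconciling the upper limits of $\ell$: Theorem~\ref{thm:ProdDepth3} contributes $\ell\le\lfloor(d+1)/2\rfloor$, whereas the depth-$2$ reduction of the middle piece applied to a polynomial of degree $d+1$ initially contributes $\ell\le\lfloor(d+2)/2\rfloor$. Showing that the asserted range $0\le\ell\le\lfloor(d+1)/2\rfloor$ suffices will require either a parity argument exploiting the symmetry of $F$ or a direct cancellation of the extremal term against the matching contribution from the triple-product piece.
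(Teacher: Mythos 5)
Your proposal is essentially the paper's own proof: the same iterated stuffle identity, the same symmetrization collapsing the six depth-$3$ terms and three depth-$2$ products, and the same dispatch of the three pieces via Theorem~\ref{thm:ProdDepth3}, Theorem~\ref{thm:myGenWtSumDepth2ZetaPolyCoeff} (applied to the Faulhaber-summed polynomial of degree $\le d+1$), and a degree count for $\sum F$ over the simplex. The ``technical obstacle'' you flag at the end is real, but you should know the paper does not resolve it either --- its proof simply asserts the conclusion, and the discrepancy is in fact a defect of the stated range rather than of the argument: for $d=0$ the paper's own Example~\ref{eg:MZVDepth3}, Eq.~\eqref{equ:MZVD3factor=1}, exhibits a nonzero $\zeta(2)\zeta(2n-2)$ term even though $\lfloor(d+1)/2\rfloor=0$, so the extremal term does \emph{not} cancel in general and the upper limit should be $\lfloor(d+2)/2\rfloor$ (consistent with the exponent $T$ in Conjecture~\ref{conj:AnyDegAnyDepth} for $m=3$). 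Your argument as written proves the theorem with that corrected upper limit and the stated degree bound $\deg C_{F,k}\le d+2$; no further parity or cancellation argument is needed or available.
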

\begin{proof} Let $S_3$ be the permutation group of $\{1,2,3\}$. For any function $f(x_1,x_2,x_3)$ of three variables we set
\begin{equation*}
    \Sym{x_1,x_2,x_3} f(x_1,x_2,x_3)=\sum_{\gs\in S_3} f(x_{\gs(1)},x_{\gs(2)},x_{\gs(3)}).
\end{equation*}
If $i+j+k=n$ then by the stuffle relation we have
\begin{equation*}
2 \zeta(2i) \zeta(2j) \zeta(2k)=2\Sym{i,j,k} \zeta(2i,2j,2k)+\Sym{i,j,k}\zeta(2i+2j)\zeta(2k)-4\zeta(2n).
\end{equation*}
Therefore by setting $\sum=\sum_{\substack{i+j+k=n\\ i,j,k\ge 1}} $ we get
\begin{multline*}
  \sum  F(i,j,k) \zeta(2i) \zeta(2j) \zeta(2k)
  =6 \sum  F(i,j,k) \zeta(2i,2j,2k)\\
  + 3\sum  F(i,j,k) \zeta(2i+2j)\zeta(2k)
  -2\sum  F(i,j,k) \zeta(2n).
\end{multline*}
An easy computation shows that
\begin{align*}
\sum i^aj^b  \zeta(2i+2j)\zeta(2k) &
=   \sum_{l=1}^{n-1}  \sum_{i=1}^{l-1} i^a(l-i)^b  \zeta(2l)\zeta(2n-2l)
=   \sum_{l=1}^{n-1}  f(l) \zeta(2l)\zeta(2n-2l),\\
\sum i^ak^b  \zeta(2i+2j)\zeta(2k) &
=   \sum_{l=1}^{n-1}  \sum_{i=1}^{l-1} i^a(n-l)^b  \zeta(2l)\zeta(2n-2l)
=   \sum_{l=1}^{n-1}  g(l,n) \zeta(2l)\zeta(2n-2l),\\
\sum  i^aj^b \zeta(2n) &
= \sum_{l=1}^{2n-2}  \sum_{i=1}^{l-1} i^a(l-i)^b \zeta(2n)
    = \sum_{l=1}^{n-1}  f(l) \zeta(2n)= h(n) \zeta(2n),
\end{align*}
where $f(x),g(x,y),h(x)\in\Q[x,y]$ are polynomials of degree at most $a+b+1$, $a+b+1$
and $a+b+2$, respectively.
Hence the theorem follows from
Theorem~\ref{thm:myGenWtSumDepth2ZetaPolyCoeff} quickly.
\end{proof}

\begin{eg} \label{eg:MZVDepth3}
Let $e_m(x,y,z)$ be the $m$-th elementary symmetric polynomial of $x,y,z$. Setting
$\sum=\sum_{\substack{i+j+k=n\\ i,j,k\ge 1}}$ we have
{\allowdisplaybreaks
\begin{align}
\sum \zeta(2i,2j,2k)
    =&\frac58\zeta(2n) -\frac14\zeta(2)\zeta(2n-2),      \label{equ:MZVD3factor=1} \\
\sum (ij+jk+ki) \, \zeta(2i,2j,2k)
    =& \frac{5n}{64}\zeta(2n)+\frac{4n-9}{16}\zeta(2) \zeta(2n-2), \label{equ:MZVD3factor=ij} \\
\sum (i^2+j^2+k^2) \, \zeta(2i,2j,2k)
    =& \frac{5n(4n-1)}{32}\zeta(2n)-\frac{2n^2+4n-9}{8}\zeta(2) \zeta(2n-2), \label{equ:MZVD3factor=i2} \\
\sum \bigg(\Sym{i,j,k}i^2 j\bigg)\, \zeta(2i,2j,2k)
    =& \frac{n(10n-3)}{128}\zeta(2n)
        +\frac{8n^2-18n+3}{32} \zeta(2)\zeta(2n-2) \notag \\
    -&\frac{3(2n-5)}{8}\zeta(4)\zeta(2n-4),   \label{equ:MZVD3factor=i2j}  \\
\sum (i^3+j^3+k^3)\, \zeta(2i,2j,2k)
    =& \frac{n(80n^2-30n+3)}{128}\zeta(2n)
        +\frac{3(2n-5)}{8}\zeta(4)\zeta(2n-4) \notag \\
   -&\frac{8n^3+24n^2-54n+3}{32} \zeta(2)\zeta(2n-2) ,   \label{equ:MZVD3factor=i3}  \\
\sum ijk\, \zeta(2i,2j,2k)
    =&\frac{n}{128}\zeta(2n) -\frac{1}{32} \zeta(2)\zeta(2n-2)
        +\frac{2n-5}{8}\zeta(4)\zeta(2n-4). \label{equ:MZVD3factor=ijk}
\end{align}}
We remark that Eq.~\eqref{equ:MZVD3factor=1} is a result of Shen and Cai \cite{ShenCa2012}.
We now prove Eq.~\eqref{equ:MZVD3factor=ij}. The others can be proved using the same idea.
First we have for all nonnegative integers $a$ and $b$
\begin{multline}\label{equ:expansionTplProdRiemannZeta}
  \sum  i^a j^b \zeta(2i) \zeta(2j) \zeta(2k)
  =  \sum \Sym{i,j,k}i^a j^b \zeta(2i,2j,2k)
  +\sum  i^a j^b \zeta(2i+2j)\zeta(2k) \\
  +\sum   i^a k^b \zeta(2i+2j)\zeta(2k)
  +\sum   i^b k^a \zeta(2i+2j)\zeta(2k)
  -2\sum   i^a j^b \zeta(2n).
\end{multline}
For  Eq.~\eqref{equ:MZVD3factor=ij} we note that $\Sym{i,j,k}ij =2e_2(i,j,k)$ so we need the following:
\begin{align*}
\sum ij\, \zeta(2i+2j)\zeta(2k)=& \sum_{l=1}^{n-1}  \sum_{i=1}^{l-1} i(l-i) \zeta(2l)\zeta(2n-2l)
    = \frac16\sum_{\substack{l+k=n\\ l,k\ge 1}}  (l^3-l) \zeta(2l)\zeta(2k),\\
\sum ik\,  \zeta(2i+2j)\zeta(2k)=& \sum_{l=1}^{n-1}  \sum_{i=1}^{l-1} i(n-l) \zeta(2l)\zeta(2n-2l)
    = \frac12\sum_{\substack{l+k=n\\ l,k\ge 1}} \big((n+1) l^2-l^3-nl \big) \zeta(2l)\zeta(2k),\\
\sum  ij\,  \zeta(2n)=& \sum_{l=1}^{n-1} \sum_{i=1}^{l-1} i(n-l) \zeta(2n)=\binom{n+1}{4}\zeta(2n).
\end{align*}
Similarly, by using Eq.~\eqref{equ:expansionTplProdRiemannZeta} and the symmetric function
\begin{equation*}
\Sym{i,j,k} i^2j=e_1(i,j,k)e_2(i,j,k)-3e_3(i,j,k),
\end{equation*}
we see that to show Eqs.~\eqref{equ:MZVD3factor=i2j} and \eqref{equ:MZVD3factor=i3}  we need the following:
{\allowdisplaybreaks
\begin{align*}
  \sum_{i=1}^{l-1} i^2 (l-i)
=&\frac1{12} (l^4-l^2) , \qquad
  \sum_{i=1}^{l-1} i^3
= \frac1{4} (l^4-l^2) ,\\
  \sum_{i=1}^{l-1} i^2 (n-l)
=& \frac16  l(l-1)(2l-1)(n-l)=\frac16 \Big((2n+3)l^3-(3n+1)l^2+nl-2l^4\Big) ,\\
  \sum_{i=1}^{l-1}  (n-l)^3
=& l^4 -(3n+1)l^3 +3(n^2+n)l^2-(n^3+3n^2)l+n^3 ,\\
  \sum_{i=1}^{l-1} i(n-l)^2
=& \frac12  l(l-1) (n-l)^2 =\frac12 \Big(l^4-(2n+1)l^3+(n^2+2n)l^2-n^2 l\Big) ,\\
\sum  i^2 j =& \frac1{12} \sum_{l=1}^{n-1} (l^4-l^2) =\frac{2n-1}{5} \binom{n+1}{4},\qquad
\sum  i^3=\frac{3(2n-1)}{5} \binom{n+1}{4}.
\end{align*}}
Using Eqs.~ \eqref{equ:Prod2RiemannZetaFactor=j}-\eqref{equ:Prod2RiemannZetaFactor=j3},
\eqref{equ:TplProdRiemannZetaFactor=ij} and \eqref{equ:expansionTplProdRiemannZeta}
we can get Eqs.~\eqref{equ:MZVD3factor=ij},  \eqref{equ:MZVD3factor=i2j} and \eqref{equ:MZVD3factor=i3}.

Now multiplying $(i+j+k)^2=n^2$ on Eq.~\eqref{equ:MZVD3factor=1} and comparing with
Eq.~\eqref{equ:MZVD3factor=ij} we can prove Eq.~\eqref{equ:MZVD3factor=i2} easily.
Similarly, by multiplying $(i+j+k)^2=n^3$ on Eq.~\eqref{equ:MZVD3factor=1} we
can readily deduce Eq.~\eqref{equ:MZVD3factor=ijk} from Eqs.~\eqref{equ:MZVD3factor=i2j} and \eqref{equ:MZVD3factor=i3}.
Exactly the same ideas lead to the following:
{\allowdisplaybreaks
\begin{align*}
&\sum (i^2jk+j^2ki+k^2ij) \, \zeta(2i,2j,2k)
     = \frac{n^2}{128}\zeta(2n) -\frac{n}{32}\zeta(2) \zeta(2n-2)
    +\frac{n(2n-5)}{8}\zeta(4)\zeta(2n-4), \\
&\sum (i^2j^2+j^2k^2+k^2i^2) \, \zeta(2i,2j,2k)
     = -\frac{n(2n-5)}{256}\zeta(2n)\\
&\hskip1cm     +\frac{3(12n^2-30n+17)}{64}\zeta(2) \zeta(2n-2)
    -\frac{4n^2+20n-75}{16}\zeta(4)\zeta(2n-4), \\
&\sum \bigg(\Sym{i,j,k}i^3 j\bigg)\, \zeta(2i,2j,2k)
    = \frac{n(10n^2-3n-5)}{128}\zeta(2n)  \\
& \hskip1cm   +\frac{8n^3-54n^2+95n-51}{32} \zeta(2)\zeta(2n-2)
   -\frac{3(2n^2-15n+25)}{8}\zeta(4)\zeta(2n-4),     \\
&  \sum (i^4+j^4+k^4)\, \zeta(2i,2j,2k)
 =\frac{n(80n^3-40n^2+6n+5)}{128}\zeta(2n) \\
&  \hskip1cm       -\frac{8n^4+32n^3-108n^2+98n-51}{32} \zeta(2)\zeta(2n-2)
        +\frac{3(4n^2-20n+25)}{8}\zeta(4)\zeta(2n-4).
\end{align*}}
Moreover, one checks easily that a suitable linear combination of
the four identities above yields
Eq.~\eqref{equ:MZVD3factor=1} multiplied by $(i+j+k)^4=n^4$ because
\begin{equation*}
(i+j+k)^4=\Sym{i,j,k} \Big(i^4 + 4i^3 j+6i^2j^2+12i^2jk \Big).
\end{equation*}
Finally,
when the weight factors have degree five we have
{\allowdisplaybreaks
\begin{align*}
&\sum  \bigg(\Sym{i,j,k} i^2j^2k \bigg) \, \zeta(2i,2j,2k)
  = \frac{n}{256}\zeta(2n)+\frac{2n^2-6n+3}{64}\zeta(2)\zeta(2n-2)\\
&\hskip1cm +\frac{(5n-9)(2n-5)}{16}\zeta(4)\zeta(2n-4)-\frac{3(2n-7)}{8}\zeta(6)\zeta(2n-6),\\
&\sum  \bigg(\Sym{i,j,k} i^3jk \bigg) \,\zeta(2i,2j,2k)
  = \frac{n(n-1)(n+1)}{128}\zeta(2n)-\frac{3(n-1)^2}{32}\zeta(2)\zeta(2n-2)\\
&\hskip1cm +\frac{(2n-5)(n^2-5n+9)}{8}\zeta(4)\zeta(2n-4)+\frac{3(2n-7)}{4}\zeta(6)\zeta(2n-6),\\
&\sum  \bigg(\Sym{i,j,k} i^3j^2 \bigg) \,\zeta(2i,2j,2k)
  = -\frac{n(2n^2+1-5n)}{256}\zeta(2n)+\frac{3(2n-7)}{8}\zeta(6)\zeta(2n-6)\\
&\hskip1cm   +\frac{57n-92n^2+36n^3-3}{64}\zeta(2)\zeta(2n-2)
-\frac{(2n-5)(2n^2+20n-9)}{16}\zeta(4)\zeta(2n-4),\\
&\sum  \bigg(\Sym{i,j,k} i^4j \bigg) \,\zeta(2i,2j,2k)
  = \frac{n(20n^3-8n^2-15n+5)}{256}\zeta(2n)\\
&\hskip1cm +\frac{16n^4-144n^3+294n^2-183n+15}{64}\zeta(2)\zeta(2n-2) \\
&\hskip1cm -\frac{(2n-5)(8n^2-70n+45)}{16}\zeta(4)\zeta(2n-4)-\frac{15(2n-7)}{8}\zeta(6)\zeta(2n-6),\\
&\sum  (i^5+j^5+k^5)\,\zeta(2i,2j,2k)
  = \frac{5n(32n^4-20n^3+4n^2+5n-1)}{256}\zeta(2n)\\
&\hskip1cm -\frac{16n^5+80n^4-360n^3+490n^2-285n+15}{64}\zeta(2)\zeta(2n-2)\\
&\hskip1cm +\frac{5(2n-5)(2n-9)(2n-1)}{16}\zeta(4)\zeta(2n-4)+\frac{15(2n-7)}{8}\zeta(6)\zeta(2n-6).
\end{align*}}
We may check the consistency by using the identity
\begin{equation*}
(i+j+k)^5=\Sym{i,j,k} \Big(i^5 + 5i^4 j+10i^3j^2+20i^3jk+30i^2j^2k \Big).
\end{equation*}

\end{eg}

We end our paper by the following general conjecture which is supported by the above examples in depth 3,
Examples \ref{eg:MoreWtSumFormulas} and Theorem~\ref{thm:anyDegDepth2} in depth 2.
\begin{conj} \label{conj:AnyDegAnyDepth}
Let $F(x_1,\dots,x_m)\in \Q[x_1,\dots,x_m]$ be a symmetric polynomial of total degree $r$.
Suppose $d=\deg_{x_1} F(x_1,\dots,x_m)$. Then for every positive integer $n\ge m$ we have
\begin{align*}
  \sum_{\substack{k_1+\dots+k_m=n\\  k_1,\dots,k_m\ge 1}} F(k_1,\dots,k_m) \zeta(2k_1) \dots\zeta(2k_m)
=\sum_{k=0}^{T} e_{F,k}(n) \zeta(2k)\zeta(2n-2k),\\
  \sum_{\substack{k_1+\dots+k_m=n\\  k_1,\dots,k_m\ge 1}} F(k_1,\dots,k_m) \zeta(2k_1,\dots,2k_m)
=\sum_{k=0}^{T} c_{F,k}(n) \zeta(2k)\zeta(2n-2k),
\end{align*}
where $T=\max\{\lfloor (r+m-2)/2 \rfloor,\lfloor (m-1)/2 \rfloor\}$,
$e_{F,k}(x),c_{F,k}(x)\in \Q[x]$ depend only on $k$ and $F$,
$\deg e_{F,k}(x)\le r-1$ and $\deg c_{F,k}(x)\le d$.
\end{conj}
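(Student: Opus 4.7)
My plan is to prove both identities simultaneously by induction on the depth $m$, using the strategy already developed in Sections~\ref{sec:bern}--\ref{sec:triple} for $m=2$ and $m=3$ as base cases. The central object is the generalized zeta function
\[
Z_m(s;r_1,\dots,r_{m-1},0)=\sum_{k_1,\dots,k_m\ge 1} k_1^{r_1}\cdots k_{m-1}^{r_{m-1}}(k_1+\cdots+k_m)^{-s},
\]
whose special value at $s=r_1+\dots+r_{m-1}+m-2n$ can be computed in two essentially different ways. First, iterating the substitution $k_m\mapsto k_m-k_1-\cdots-k_{m-1}$ exactly as in the derivation of Eq.~\eqref{equ:withLastZ2} expands $Z_m$ as a linear combination of lower-depth $Z_{m-1}$ functions with Bernoulli coefficients; after substituting Proposition~\ref{prop:Z2general} this yields a closed formula in terms of Riemann zeta values and products $B_{k_1}\cdots B_{k_m}$. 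Second, Eie's integral method generalizes directly: one writes $Z_m$ as $J^m$ applied to $x_1^{r_1}\cdots x_{m-1}^{r_{m-1}}(x_1+\dots+x_m)^{-s}$ plus cyclic correction terms given by nested integrals over simplices.

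Equating the two expressions yields, as in Theorem~\ref{thm:ProdBernDepth3}, a weighted sum identity of the form
\[
\sum_{\substack{i_1+\dots+i_m=2n\\ i_1,\dots,i_m\ge 1}}\frac{B_{i_1}\cdots B_{i_m}}{i_1!\cdots i_m!}\prod_{a=1}^{r_1}(i_1-a)\cdots\prod_{b=1}^{r_{m-1}}(i_{m-1}-b)=\sum_{k}\frac{B_k B_{2n-k}}{k!(2n-k)!}\cdot P_{\mathbf r,k}(n),
\]
where the right-hand side involves only products of two Bernoulli numbers and a lower-depth correction governed inductively by the $m{-}1$ case. Applying Eq.~\eqref{equ:EulerFamousID} translates this into a weighted sum formula for $\zeta(2k_1)\cdots\zeta(2k_m)$ whose right-hand side is supported on $\zeta(2k)\zeta(2n-2k)$. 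Then, exactly as in the proof of Theorem~\ref{thm:myGenWtSumDepth2ZetaPolyCoeff}, Stirling numbers convert the descending-factorial weights into arbitrary monomials $k_1^{a_1}\cdots k_{m-1}^{a_{m-1}}$; using $k_m=n-k_1-\dots-k_{m-1}$ and symmetrizing by the $S_m$ action, this handles an arbitrary symmetric $F$ and establishes the first (product) identity of the conjecture.

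For the MZV identity I would then apply the iterated stuffle product, which writes
\[
\zeta(2k_1)\cdots\zeta(2k_m)=\sum_{\sigma\in S_m}\zeta(2k_{\sigma(1)},\dots,2k_{\sigma(m)})+(\text{strictly lower depth terms}),
\]
where each lower-depth term is a product of an MZV of depth $<m$ with some $\zeta(2k_{i_1}+\dots+2k_{i_j})$. Summing against $F(k_1,\dots,k_m)$ and invoking the symmetry of $F$, the leading contribution is $m!\sum F(k_1,\dots,k_m)\zeta(2k_1,\dots,2k_m)$, while each lower-depth term is, after collecting variables, a sum of the form $\sum G(l_1,\dots,l_{m'})\zeta(2l_1,\dots,2l_{m'})$ with $m'<m$ and $G$ symmetric. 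The induction hypothesis applied to each of these reduces them to the target right-hand side $\sum_k c_{F,k}(n)\zeta(2k)\zeta(2n-2k)$, closing the induction.

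The main obstacle will be Step~5, the sharp degree bounds $\deg e_{F,k}\le r-1$ and $\deg c_{F,k}\le d$ together with the explicit cutoff $T=\max\{\lfloor(r+m-2)/2\rfloor,\lfloor(m-1)/2\rfloor\}$. These are not obvious from the recursion: the combinatorial identities of Lemma~\ref{lem:anyDegDepth2combin}-type, which killed the expected-to-appear term $\zeta(2n)$ of degree $2r+1$ and brought it down to degree $r$, will have to be generalized to the depth-$m$ setting, and I expect this to require a depth-$m$ analogue of Lemma~\ref{lem:anyDegDepth2JKonly} expressing $\prod_i (2k_i-a)$ in terms of elementary symmetric polynomials of $k_1,\dots,k_m$ modulo the relation $k_1+\dots+k_m=n$. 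Verifying that the many "boundary" contributions from Eie's integral formula telescope or cancel in the right way to produce the stated degree bound is where I expect the real work to lie; a clean way forward may be to introduce a generating series $\sum_{r_1,\dots,r_{m-1}}Z_m(s;\mathbf r,0)\,t_1^{r_1}\cdots t_{m-1}^{r_{m-1}}/(r_1!\cdots r_{m-1}!)$ and track degrees at the level of formal power series rather than term-by-term.
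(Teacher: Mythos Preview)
The statement you are trying to prove is labeled \emph{Conjecture}~\ref{conj:AnyDegAnyDepth} in the paper, and the paper does \emph{not} supply a proof; it explicitly says ``We end our paper by the following general conjecture which is supported by the above examples.'' So there is no proof in the paper to compare your proposal against.

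Your outline is a faithful extrapolation of the paper's depth-$2$ and depth-$3$ arguments (Sections~\ref{sec:bern}--\ref{sec:triple}) to general $m$: compute $Z_m(s;r_1,\dots,r_{m-1},0)$ both by iterated reduction to $Z_{m-1}$ and by Eie's integral representation, equate, pass to zeta products via Eq.~\eqref{equ:EulerFamousID}, and then descend from products to MZVs by the stuffle relation and induction on depth. This is precisely the strategy the authors use in the cases they actually prove, and it is plausible that it would establish the \emph{existence} of expressions of the claimed shape for general $m$.

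However, you have correctly located the genuine gap: the sharp degree bounds $\deg e_{F,k}\le r-1$ and $\deg c_{F,k}\le d$ and the explicit cutoff $T$ do not fall out of the recursion. Already in depth~$2$ the paper needs the nontrivial combinatorial cancellation of Lemma~\ref{lem:anyDegDepth2combin} to bring the degree of the $\zeta(2n)$-coefficient down from $2r+1$ to $r$, and no depth-$m$ analogue of that lemma is supplied (or even formulated). Your suggestion to package everything into a generating series is a reasonable line of attack, but it is speculative; absent a concrete depth-$m$ cancellation identity, the proposal remains a plan rather than a proof. In short: your strategy matches the paper's methodology, but the paper itself stops at a conjecture precisely because the step you flag as ``the main obstacle'' is unresolved.
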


Notice that $T=\lfloor (m-1)/2 \rfloor$ or $T=\lfloor (r+m-2)/2 \rfloor$ depending on whether $r=0$ or $r>0$.
When $r=d=0$ the second formula of the conjecture follows from the main result in \cite{Hoffman2012} by Hoffman.
When $r=d=1$ then $F(k_1,\dots,k_m)=k_1+,\dots+k_m=n$ so the sum formula reduces to the
case $r=d=0$.

\smallskip

\noindent
{\bf Acknowledgements.} This work is supported by the National Natural Science
Foundation of China (Grant No. 11371178) and the National Science
Foundation of US (Grant No. DMS~1001855 and DMS~1162116).
Part of the work was done while
JZ was visiting the Max-Planck Institute for Mathematics
and the Kavli Institute for Theoretical Physics China
whose support is gratefully acknowledged.

\end{document}